\numberwithin{equation}{section} 
\theoremstyle{plain}
\newtheorem{thm}{Theorem}[section]
\newtheorem{prop}[thm]{Proposition}
\newtheorem{lemma}[thm]{Lemma}
\theoremstyle{definition}
\newtheorem{defin}[thm]{Definition}
\newtheorem{rmk}[thm]{Remark}
\newcommand{\R}{\mathbb{R}}
\newcommand{\norm}[1]{\left\lVert#1\right\rVert}
\newcommand{\eps}{\varepsilon}
\DeclareMathOperator{\vol}{Vol}
\newcommand{\MP}{\mathcal{MP}}
\newcommand{\A}{\mathbb{A}}
\newcommand{\dd}{\mathbf{d}}
\newcommand{\LL}{\mathbf{L}}
\newcommand{\cal}{\mathcal{L}}
\def\XXint#1#2#3{{\setbox0=\hbox{$#1{#2#3}{\int}$ }
\vcenter{\hbox{$#2#3$ }}\kern-.6\wd0}}
\newcommand\hhat[1]{%
\savestack{\tmpbox}{\stretchto{%
  \scaleto{%
    \scalerel*[\widthof{\ensuremath{#1}}]{\kern.1pt\mathchar"0362\kern.1pt}%
    {\rule{0ex}{\textheight}}
  }{\textheight}%
}{2.4ex}}%
\stackon[-6.9pt]{#1}{\tmpbox}%
}
\title[The linearization of the boundary rigidity problem for $\MP$-systems]{The linearization of the boundary rigidity problem for MP-systems and generic local boundary rigidity}
\author{Sebastián Muñoz-Thon}
\address{Department of Mathematics, Purdue University, West Lafayette, IN 47907.}
\email{smunozth@purdue.edu}
\begin{document}

\begin{abstract}
We consider an $\MP$-system, that is, a compact Riemannian manifold with boundary, endowed with a magnetic field and a potential. On simple $\MP$-systems, we study the $\MP$-ray transform in order to obtain new boundary rigidity results for $\MP$-systems. We show that there is an explicit relation between the $\MP$-ray transform and the magnetic one, which allow us to apply results from \cite{dpsu} to our case. Regarding rigidity, we show that there exists a generic set $\mathcal{G}^{m}$ of simple $\MP$-systems, which is open and dense, such that any two $\MP$-systems close to an element in it and having the same boundary action function, must be $k$-gauge equivalent.
\end{abstract}

\maketitle

\section{Introduction}

\subsection{Previous results}

In geometric inverse problems, the boundary rigidity problem is a classical question that ask to what extent one can recover the metric $g$ by knowing the boundary distance function $d_{g}|_{\partial M \times \partial M}$. Here $(M,g)$ is a smooth ($C^{\infty}$) compact Riemannian manifold with smooth boundary. Instead of attacking the problem directly, one could try to analyze its linearized version. This is known as the X-ray tomography problem, in which one try to recover a function (or more generally, tensors) from the knowledge of its integrals along geodesics. This operator, the X-ray transform, appears naturally in medical imaging, and it is from this context that it gets its name. References about this operator and some generalizations are \cite{sha} and \cite{psu}.

One of the generalization in which we are interested is to study the X-ray transform over more general curves and obtain information about the boundary rigidity problem involving that curves. In \cite{dpsu}, the authors study the magnetic ray transform, which arises as the linearization of the boundary rigidity problem on simple magnetic systems $(M,g,\alpha)$, where $(M,g)$ is a smooth compact Riemannian manifold with smooth boundary and $\alpha$ is a 1-form. In that work, the authors study the X-ray transform and its normal operator. They solved (up to a natural gauge) the boundary rigidity problem, on a conformal class, for analytic magnetic systems, for surfaces, and for metrics close a generic set of metrics. They also solve the linear problem for 1-tensors and 2-tensors. The magnetic ray transform is also studied in \cite{ainsworth} and \cite{zhou}.

In this paper we focus on the $\MP$ case. An $\MP$-system consist of a smooth compact Riemannian manifold with smooth boundary $(M,g)$, a closed 2-form $\Omega$, and a smooth function $U$. The curves in this case describe the motion of a particle on a Riemannian manifold under the influence of a magnetic field represented by $\Omega$, and a potential field represented by the function $U$. This kind of systems appears in mechanics, see \cite{kozlov}, \cite{arnold}, \cite{akn}, \cite{cggmw}, \cite{maraner}. They also appear when one studies geodesics on Lorentzian manifolds endowed with stationary metrics \cite{germinario}, \cite{bg}, \cite{plamen}, in inverse problems for the acoustic wave equation from phaseless measurements \cite{iw}, and in inverse problems in transport equations with external forces on Euclidean domains \cite{lz}. We would like to mention that the X-ray transform has also been studied for general curves, see \cite{fsu}, \cite{uv}, \cite{ad}, and \cite{zhang}.

Given an $\MP$-system $(M,g,\Omega,U)$, the magnetic field $\Omega$ induces a map $Y \colon TM \to TM$ given by
\[ \Omega_{x}(u,v)=(Y_{x}u,v)_{g}, \]
where $u,v \in T_{x}M$. This map is usually called \emph{Lorentz force} associated to the magnetic field $\Omega$. For simple $\MP$-systems, $C^{2}$ curves $\sigma \colon [a,b] \to M$ that satisfy
\begin{equation} \label{eq:mp-geo}
    \nabla_{\dot{\sigma}} \dot{\sigma}=Y(\dot{\sigma})-\nabla U(\sigma),
\end{equation}
are called \emph{$\MP$-geodesics}. Here, $\nabla$ is the Levi-Civita connection associated to the metric $g$. Equation \eqref{eq:mp-geo} defines a flow , called the \emph{$\MP$-flow}, and given by
\[ \phi_{t}(x,v)=(\sigma(t),\dot{\sigma}(t)),  \]
where $\sigma$ solves \eqref{eq:mp-geo} and $\sigma(0)=x$, $\dot{\sigma}(0)=v$. See \cite{mt23}*{Lemma A.1} for other interpretations of the $\MP$-flow. For the $\MP$-flow the energy $E \colon TM \to \R$ given by $E(x,v)=\frac{1}{2}|v|_{g}^{2}+U(x)$ is an integral of motion. Indeed, for $\sigma$ satisfying \eqref{eq:mp-geo}, we have
\[ \frac{d}{dt}E(\sigma(t),\dot{\sigma}(t))=( \nabla_{\dot{\sigma}}\dot{\sigma},\dot{\sigma})_{g}+(\nabla U,\dot{\sigma})=( Y \dot{\sigma},\dot{\sigma})_{g}=\Omega(\dot{\sigma},\dot{\sigma})=0. \]
Then, the energy is constant along $\MP$-geodesics. It has been shown that $\MP$-geodesics minimize the \emph{time free action} of energy $k$ (see \cite{az}*{Appendix A.1})
\[ \A(\sigma)=\frac{1}{2} \int_{0}^{T}|\dot{\sigma}|_{g}^{2}dt+kT-\int_{0}^{T}(\alpha(\sigma(t),\dot{\sigma}(t))+U(\sigma(t)) )dt, \]
where 
\[ \sigma \in \mathcal{C}(x,y)=\{\sigma \in AC([0,T],M): \sigma(0)=x \text{ and } \sigma(T)=y\}, \]
so that the \emph{Ma\~n\'e action potential} (of energy $k$) is well defined
\begin{equation} \label{eq:mane_act}
    \A(x,y)=\inf_{\gamma \in \mathcal{C}(x,y)}\A(\gamma).
\end{equation}
The ``action'' terminology comes form physics and Lagrangian flows, see \cite{am}, \cite{arnoldclas}, \cite{ci}, \cite{paternain}, \cite{mazzucchelli}.

The \emph{boundary rigidity problem for $\MP$-systems} asks to what extent one can recover the metric $g$, the magnetic field $\Omega$ (or $\alpha$), and the potential $U$, by knowing the boundary action function $\mathbb{A}_{g,\alpha,U}|_{\partial M \times \partial M}$ (of energy $k$). In the flat case, these problems were studied in \cite{jo2007}. For Riemannian manifolds, these problems were studied in \cite{az} \cite{mt23}. In the former one, the authors prove that for simple $\MP$-systems, the knowledge of the boundary action function for two energy levels allows to recover the system $(g,\alpha,U)$ up to a gauge, in three cases: working on the same conformal class, for analytic $\MP$-systems, and working on surfaces. In \cite{mt23}, the author prove the same results but only assuming the knowledge of the boundary action function for one energy level, under a more general gauge, see Definition \ref{defin:gauge}.

In this work, we study the linearization of the boundary rigidity problem for $\MP$-systems. We study this problem because is interesting in its own, and in order to obtain new rigidity results. Regarding the linearization, we have the following:
\begin{itemize}
    \item s-injectivity of the $\MP$-ray transform for analytic systems (Theorem \ref{thm:an_inj});
    \item solve the linear problem for 1-tensors (Theorem \ref{thm:linear_1tensors});
    \item solve the linear problem for 2-tensors (Theorem \ref{thm:linear_2tensors});
    \item for $m$ big enough, we show that there is an open dense set in the $C^{m}$ topology in which the $\MP$-ray transform is s-injective (Theorem \ref{thm:gen_sinj}).
\end{itemize}
See Definition \ref{defin:s_inj} for the definition of s-injectivity.
Using this generic set, we obtain a local generic rigidity result: we prove that two systems with the same boundary action function at one energy level, both close to a system in the generic set, are $k$-gauge equivalent.

The proof of the results rely on the relation between $\MP$-systems and magnetic ones. We obtain a relation between the $\MP$-ray transform using the relation between $\MP$-geodesics and magnetic ones (Lemma \ref{lemma:basics_mp}), and the fact that $\MP$-geodesics has constant energy, which gives a relation between 2-tensors and functions. We also obtain a relation between the notion of s-injectivity for the $\MP$-ray transform and for the magnetic ray transform (Proposition \ref{prop:s_inj}). This reduce the study of potentials on the $\MP$ sense to the magnetic ones. Hence, we are able to apply results in \cite{dpsu} to obtain new information about the linear problem for $\MP$-systems. To obtain the generic results, we show that if two $\MP$-systems are close in the $C^{m}$ topology, then their reductions are also close, and we apply the generic results known for magnetic cases.

\subsection{Structure of the paper}

In Section \ref{sec:review} we briefly summarize some facts about $\MP$-systems and they relation with magnetic systems that would be used in this work. In Section \ref{sec:ray_transform} we define the $\MP$-ray transform, we show that is the linearization of the boundary rigidity problem, and explore its relation with the magnetic ray transform. In Section \ref{sec:potential} we study the potential parts and their relation with the potential parts of the magnetic ray transform. In Section \ref{sec:s_inj_an} we prove the s-injectivity of the $\MP$-ray transform for simple analytic systems. In Section \ref{sec:linear} we study the linear problem. In Subsection \ref{subsec:gen_s_inj} we use results from Section \ref{sec:s_inj_an} to obtain a generic set in which the $\MP$-ray transform is s-injective. In Subsection \ref{subsec:gen_rig} we obtain the generic local result. Finally, in the appendix we give a Santal\'o's formula for $\MP$-systems.

\subsection*{Acknowledgments}

The author would like to thank Plamen Stefanov for suggesting this problem, for helpful discussions about magnetic systems, and for helpful comments on a previous version of this manuscript. The author would like to thank Gunther Uhlmann for helpful suggestions on a preliminary version for this work. The author was partly supported by NSF Grant DMS-2154489.

\section{Review of MP-systems} \label{sec:review}

In this section we summarize important properties of $\MP$-systems that will be used in this work.

Recall that an $\MP$-system is consist of tuple $(M,g,\Omega,U)$, where $M$ is a compact smooth manifold with smooth boundary, $g$ is a Riemannian metric, $\Omega$ is a closed 2-form and $U$ is a smooth function. Since we are going to work on a fixed manifold $M$, we will going to refer to the triple $(g,\Omega,U)$ as an $\MP$-system. 

As was mentioned in the introduction, $\MP$-geodesics has constant energy. Given $k \in \R$, we define $S^{k}M:=\{E=k\}$. We will assume always that $k>\max_{M} U$ (so that $S^{k}M$ is a non-empty level set, see also \cite{az}*{Appendix A} and \cite{ci}*{Chapter 2}). Let $\nu(x)$ be the inward unit vector normal to $\partial M$ at $x$, and set
\[
\partial_{\pm} S^{k}M:=\{(x, v) \in S^{k}M: x \in \partial M, \pm (v, \nu(x))_{g} \geq 0\}.
\]
For $x \in M$, the $\MP$-exponential map at $x$ is the map $\exp _x^{\MP} \colon T_{x} M \to M$ given by
\[
\exp _x^{\MP}(t v)=\pi \circ \phi_t(v),
\]
where $t \geq 0$, $v \in S_{x}^{k} M$, and $\pi \colon TM \to M$ is the canonical projection. 

Let $\Lambda$ denotes the second fundamental form of $\partial M$. Consider a manifold $M_1$ such that $M_1^{\text {int }} \supset M$. Extend $g$, $\Omega$ and $U$ to $M_1$ smoothly, preserving the former notation for extensions. $M$ is said to be \emph{$\MP$-convex} at $x \in \partial M$ if there is a neighborhood $O$ of $x$ in $M_1$ such that all $\MP$-geodesics of constant energy $k$ in $O$, passing through $x$ and tangent to $\partial M$ at $x$, lie in $M_1 \setminus M^{\text {int }}$. If, in addition, these geodesics do not intersect $M$ except for $x$, we say that $M$ is \emph{strictly $\MP$-convex} at $x$. By \cite{az}*{Lemma~A.2}, strictly $\MP$-convexity at $x \in \partial M$ implies
\[
\Lambda(x, v)>\langle Y_x(v), \nu(x)\rangle-d_x U(\nu(x))
\]
for all $(x, v) \in S^{k}(\partial M)$. 

\begin{defin} \label{defin:simple}
We say that $M$ is ($\MP$) \emph{simple} with respect to $(g, \Omega, U)$ if $\partial M$ is strictly $\MP$-convex and the $\MP$-exponential map $\exp_{x}^{\MP} \colon (\exp_x^{\MP})^{-1}(M) \to M$ is a diffeomorphism for every $x \in M$.    
\end{defin}

In this case, $M$ is diffeomorphic to the unit ball of $\R^n$. Hence, Poincaré's lemma implies that $\Omega$ is exact, that is, there exist a 1-form $\alpha$ such that $\Omega=d\alpha$, and we call $\alpha$ to be the magnetic potential.

Henceforth we call $(g, \alpha, U)$ a simple $\MP$-system on $M$. We will also say that $(M, g, \alpha, U)$ is a simple $\MP$-system. When $\alpha=0$ and $U=0$, these notions coincide with the usual notion of simple Riemannian manifold, see \cite{psu}*{Section~3.8}.

For $(x, v) \in \partial_{+} S^{k}M$, let $\tau(p, v)$ be the time when the $\MP$-geodesic $\sigma$, such that $\sigma(0)=p$, $\dot{\sigma}(0)=v$, exits. By \cite{az}*{Lemma~A.3} we have that for a simple $\MP$-system, the function $\tau \colon \partial_{+} S^{k}M \to \R$ is smooth.

\begin{defin}
Given a simple $\MP$-system $(g,\alpha,U)$ of energy $k$, we associate to it the magnetic system $(2(k-U)g,\alpha)$ of energy $\frac{1}{2}$, which we call \emph{reduced magnetic system}.    
\end{defin}

The interplay between an $\MP$-system an its magnetic reduction was studied in detail in \cite{az}. We now state some results that are useful for this work.

\begin{lemma}[\cite{az}*{Proposition~1, 2, 3}] \label{lemma:basics_mp}
Let $(g,\alpha,U)$ be $\MP$-system with energy $k$ an let $(G,\alpha):=(2(k-U)g,\alpha)$ be its reduction to a magnetic system of energy $\frac{1}{2}$.
\begin{enumerate}
    \item If $k>\max_{x \in M}U$ and $\sigma$ is an $\MP$-geodesic of energy $k$, then there exists a reparametrization of $\sigma$ that is a unit speed magnetic geodesic for the reduced system.
    \item $(g,\alpha,U)$ is simple (in the $\MP$-sense) if and only if $(G,\alpha)$ is simple (in the magnetic sense).
    \item Let $\A$ be the Ma\~n\'e's action potential of energy $k$ for $(g, \alpha, U)$ and $\mathbb{A}_G$ be the Ma\~n\'e's action potential of energy 1/2 for the simple magnetic system $(G, \alpha)$, then $\A|_{\partial M \times \partial M}=\A_{G}|_{\partial M \times \partial M}$.
\end{enumerate}
\end{lemma}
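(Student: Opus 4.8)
All three statements are instances of the Maupertuis--Jacobi reduction, and the plan is to establish (1) by a direct computation and then obtain (2) and (3) as consequences. Write $\lambda := 2(k-U)$, so that $G = \lambda g$, which is a genuine metric exactly because $k > \max_M U$. On the energy shell $S^{k}M$ one has $|\dot{\sigma}|_g^2 = 2(k-U) = \lambda$, hence $|\dot{\sigma}|_G = \lambda$; since this is non-constant a reparametrization is unavoidable. I would introduce the new time $s$ by $ds/dt = \lambda(\sigma(t))$ and set $\gamma(s) = \sigma(t(s))$, so that $\gamma' = \lambda^{-1}\dot{\sigma}$ and a one-line computation gives $|\gamma'|_G \equiv 1$, i.e. $\gamma$ has energy $\tfrac{1}{2}$ for the reduced system.

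The heart of (1) is to verify that $\gamma$ solves the magnetic geodesic equation $\nabla^{G}_{\gamma'}\gamma' = Y^{G}(\gamma')$ of $(G,\alpha)$. For this I would use the conformal change of the Levi-Civita connection, $\nabla^{G}_{X}X = \nabla_{X}X + 2(X\rho)X - g(X,X)\nabla\rho$ with $\rho = \tfrac{1}{2}\log\lambda$ (so that $\nabla\rho = -\lambda^{-1}\nabla U$), together with $Y^{G} = \lambda^{-1}Y$, which follows from $g(Yu,v) = \Omega(u,v) = G(Y^{G}u,v) = \lambda\, g(Y^{G}u,v)$. Feeding in the $\MP$-equation $\nabla_{\dot{\sigma}}\dot{\sigma} = Y\dot{\sigma} - \nabla U$ and the chain rule for $s \mapsto t$, one finds that the term from $2(\gamma'\rho)\gamma'$ cancels the term produced by differentiating $\lambda^{-1}$, while $-g(\gamma',\gamma')\nabla\rho$ cancels precisely the $-\nabla U$ contribution, leaving $\nabla^{G}_{\gamma'}\gamma' = \lambda^{-2}Y\dot{\sigma} = Y^{G}(\gamma')$. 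The conformal factor $\lambda = 2(k-U)$ is exactly the one that forces these cancellations, and carrying out this bookkeeping cleanly is the main obstacle of the lemma.

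For (2), observe that by (1) the energy-$k$ $\MP$-geodesics and the unit-speed magnetic geodesics of $(G,\alpha)$ are the same curves up to the orientation-preserving reparametrization $t \mapsto s$; in particular they coincide as point sets and join the same endpoints. Since $t \mapsto s$ is a diffeomorphism of the parameter intervals, $\exp^{\MP}_{x}$ is a diffeomorphism onto $M$ if and only if the magnetic exponential map of $(G,\alpha)$ is, so the two no-conjugate-point conditions are equivalent. Strict convexity transfers as well, because its defining property---that every such geodesic tangent to $\partial M$ at $x$ stays in $M_1 \setminus M^{\mathrm{int}}$---depends only on the trajectories as subsets of $M_1$, which are unchanged; alternatively one checks that the two second-fundamental-form inequalities transform into one another under $G = \lambda g$. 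Combining the two equivalences gives (2).

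Finally, for (3) I would compare the two action functionals along a single curve. Substituting $\tfrac{1}{2}|\dot{\sigma}|_g^2 = k-U$ into the time-free action shows that, for an energy-$k$ curve, $\A(\sigma) = \int_0^{T}\lambda\,dt - \int_0^{T}\alpha(\dot{\sigma})\,dt = \ell_G(\sigma) - \int_{\sigma}\alpha$, where $\ell_G$ denotes $G$-length; the analogous manipulation identifies the magnetic action of energy $\tfrac{1}{2}$ of the reparametrized curve with $\ell_G(\gamma) - \int_{\gamma}\alpha$. Because reparametrization preserves both $\ell_G$ and $\int\alpha$, the two actions agree along corresponding curves. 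By simplicity each action potential between two boundary points is realized by the unique connecting geodesic, and these geodesics coincide by (1), so the two minimal values agree, yielding $\A|_{\partial M \times \partial M} = \A_{G}|_{\partial M \times \partial M}$.
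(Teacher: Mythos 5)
Your proposal is correct, but there is nothing in the paper to compare it against: the paper imports this lemma wholesale from \cite{az}*{Propositions~1, 2, 3} and gives no proof of its own, so what you have written is a self-contained reconstruction of the Maupertuis--Jacobi reduction that the paper treats as a black box. Your computation in (1) checks out: with $\lambda=2(k-U)$ and $\rho=\tfrac12\log\lambda$ one has $\nabla\rho=-\lambda^{-1}\nabla U$, and
\[
\nabla_{\gamma'}\gamma'=2\lambda^{-3}\,dU(\dot\sigma)\,\dot\sigma+\lambda^{-2}\bigl(Y\dot\sigma-\nabla U\bigr),\qquad
2(\gamma'\rho)\gamma'=-2\lambda^{-3}\,dU(\dot\sigma)\,\dot\sigma,\qquad
-g(\gamma',\gamma')\nabla\rho=\lambda^{-2}\nabla U,
\]
so the two cancellations you describe do occur, leaving $\nabla^{G}_{\gamma'}\gamma'=\lambda^{-2}Y\dot\sigma=Y_{G}(\gamma')$; this is also consistent with the identities $Y_{G}=\frac{1}{2(k-U)}Y_{g}$ and the conformal formula for $d^{s}_{G}$ that the paper derives independently in Section~\ref{sec:potential}.

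Two steps deserve tightening, though neither is fatal. In (2), the diffeomorphism property of the exponential maps does not transfer merely because ``$t\mapsto s$ is a diffeomorphism of the parameter intervals'': one needs that the fiberwise map $tv\mapsto s(t,v)\,v/\lambda(x)$, $v\in S^{k}_{x}M$, is itself a diffeomorphism from $(\exp_{x}^{\MP})^{-1}(M)$ onto the corresponding domain of the magnetic exponential map, intertwining the two; this holds by smooth dependence on initial conditions and $\partial s/\partial t=\lambda>0$, but smoothness and invertibility across the zero section should be stated rather than implied. In (3), the identity $\A(\sigma)=\ell_{G}(\sigma)-\int_{\sigma}\alpha$ is valid only along curves of energy $k$, while the Ma\~n\'e potential is an infimum over \emph{all} absolutely continuous curves; your argument therefore genuinely needs that the infima are attained by the connecting geodesics, which for the $\MP$-side is \cite{az}*{Lemma~A.1} (quoted in the paper), so the ingredient is legitimate. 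Alternatively you could bypass minimizers entirely: the pointwise inequality $\tfrac12|\dot\sigma|^{2}_{g}+(k-U)\ge|\dot\sigma|_{G}$ (with equality exactly at energy $k$), and its magnetic analogue $\tfrac12|\gamma'|^{2}_{G}+\tfrac12\ge|\gamma'|_{G}$, show that both potentials equal $\inf\bigl(\ell_{G}-\int\alpha\bigr)$ over all curves joining the endpoints, a quantity that is manifestly reparametrization invariant; this proves (3) without invoking existence or uniqueness of minimizers.
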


The definition of simple magnetic systems and the definition of the Ma\~n\'e's potential for magnetic systems, are the same as above with $U=0$.

The new parameter such that $\gamma(s)=\sigma(t(s))$ is a magnetic unit speed geodesic is given by
\[ s(t)=\int_{0}^{t}2(k-U(\sigma))dt. \]
Part (1) on Lemma \ref{lemma:basics_mp} is known as Jacobi--Maupertuis' principle.

We also recall the notion of gauge for $\MP$-system defined by the author.

\begin{defin} \label{defin:gauge}
We say that two $\mathcal{M P}$-systems $(g, \alpha, U)$ and $(g', \alpha', U')$ are $k$-\emph{gauge equivalent} if there is a diffeomorphism $f\colon M \to M$ with $f|_{\partial M}=id_{\partial M}$, a smooth function $\varphi \colon M \to \R$ with $\varphi|_{\partial M}=0$, and a strictly positive function $\mu \in C^{\infty}(M)$, such that $g'=\frac{1}{\mu}f^{*}g$, $\alpha'=f^* \alpha+d \varphi$ and $U'=\mu (f^{*}U-k)+k$.
\end{defin}

Finally, we would like to mention a relation between the notion of $k$-gauge equivalence for $\MP$ and the notion of gague equivalence of their reduced systems.

\begin{lemma}[\cite{mt23}*{Lemma 4.3}] \label{lemma:mp_red}
Let $(g,\alpha,U)$ and $(g',\alpha',U')$ be two $k$-gauge equivalent $\MP$-system. Then, their reduced magnetic systems are magnetic gauge equivalent at energy $\frac{1}{2}$. Reciprocally, if $(G,\alpha)$ and $(G,\alpha')$ are gauge equivalent magnetic systems (of energy $\frac{1}{2}$) given by the reduction of the $\MP$-systems $(g,\alpha,U)$ and $(g',\alpha',U')$, then the $\MP$-systems are $k$-gauge equivalent.
\end{lemma}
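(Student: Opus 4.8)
The plan is to reduce both implications to a direct computation, exploiting the fact that the reduction $G=2(k-U)g$ bundles the metric $g$ and the potential $U$ into a single conformal metric. In both directions the diffeomorphism $f$ and the function $\varphi$ are shared between the $\MP$-gauge and the magnetic gauge; the only datum that needs to be matched is the conformal factor $\mu$, which is dictated by the potentials. Throughout I write $G=2(k-U)g$ and $G'=2(k-U')g'$ for the two reductions, and I use that a magnetic gauge equivalence of energy $\frac12$ between them means precisely that $G'=f^{*}G$ and $\alpha'=f^{*}\alpha+d\varphi$ for some diffeomorphism $f$ with $f|_{\partial M}=\mathrm{id}_{\partial M}$ and some $\varphi$ with $\varphi|_{\partial M}=0$ (this is Definition~\ref{defin:gauge} specialized to $U=0$, where the relation for the potential forces the conformal factor to be $1$).

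For the forward implication, suppose $(g,\alpha,U)$ and $(g',\alpha',U')$ are $k$-gauge equivalent via $(f,\varphi,\mu)$, so that $g'=\tfrac{1}{\mu} f^{*}g$, $\alpha'=f^{*}\alpha+d\varphi$ and $U'=\mu(f^{*}U-k)+k$. The last relation rearranges to $k-U'=\mu(k-f^{*}U)$, whence
\[ G'=2(k-U')g'=2\mu(k-f^{*}U)\,\tfrac{1}{\mu}\,f^{*}g=2(k-f^{*}U)f^{*}g=f^{*}\bigl(2(k-U)g\bigr)=f^{*}G. \]
Together with $\alpha'=f^{*}\alpha+d\varphi$ and the boundary conditions on $f$ and $\varphi$, this is exactly a magnetic gauge equivalence of energy $\frac12$, so the reductions are gauge equivalent.

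For the converse, assume the reductions are magnetically gauge equivalent, i.e.\ $G'=f^{*}G$ and $\alpha'=f^{*}\alpha+d\varphi$. Expanding the metric identity gives $(k-U')g'=(k-f^{*}U)f^{*}g$. Since the standing assumption $k>\max_{M}U$ (and likewise for $U'$) guarantees $k-f^{*}U>0$ and $k-U'>0$ everywhere, I define the strictly positive function $\mu:=(k-U')/(k-f^{*}U)\in C^{\infty}(M)$. The metric identity then reads $g'=\tfrac{1}{\mu}f^{*}g$, and solving the defining equation of $\mu$ for $U'$ yields $U'=\mu(f^{*}U-k)+k$; combined with $\alpha'=f^{*}\alpha+d\varphi$, the triple $(f,\varphi,\mu)$ witnesses that $(g,\alpha,U)$ and $(g',\alpha',U')$ are $k$-gauge equivalent. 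The one point to watch — and the reason the statement holds rather than being merely plausible — is this consistency in the converse: the single scalar $\mu$ forced on us by the potentials must coincide with the conformal factor relating $g'$ and $f^{*}g$. This is not accidental but a consequence of the reduction collapsing the pair $(g,U)$ into the one object $G$, so that the single equality $G'=f^{*}G$ simultaneously encodes both the metric and the potential relations; the positivity and smoothness of $\mu$ rest exactly on $k$ exceeding the maxima of $U$ and $U'$, which is the hypothesis under which the reductions are genuine metrics.
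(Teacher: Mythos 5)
Your proof is correct, and it is essentially the same argument as the cited source (\cite{mt23}*{Lemma 4.3}, which this paper quotes without reproving): the identity $k-U'=\mu(k-f^{*}U)$ makes $G'=f^{*}G$ equivalent to the pair of relations $g'=\tfrac{1}{\mu}f^{*}g$, $U'=\mu(f^{*}U-k)+k$, with $\mu=(k-U')/(k-f^{*}U)$ recovered in the converse exactly as you do. Your remarks on why $\mu$ is smooth and positive (from $k>\max U$, $k>\max U'$) and on why the magnetic gauge forces conformal factor $1$ are the right points to flag.
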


\section{The MP-ray transform} \label{sec:ray_transform}

In this section we define the $\MP$-ray transform. We show that, as in the Riemannian and the magnetic case, arises as the linearization of the boundary action function. Finally, we show how is related with the magnetic ray transform of the reduced magnetic system.

\subsection{Definitions}

\begin{defin}
Let $(M,g,\alpha,U)$ be a simple $\MP$-system. For $f \in C^{\infty}(S^{k}M,\R)$. We define the \emph{$\MP$-ray transform} of $f$ by
\[ If(\sigma)=\int_{\sigma}f:=\int_{0}^{T}f(\sigma(t),\dot{\sigma}(t))dt,\]
where $\sigma \colon [0,T] \to M$ is any geodesic of energy $k$ with $\sigma(0),\sigma(T) \in \partial M$.
\end{defin}

This operator has been studied in \cite{iw}*{Appendix~A} for functions. In the next section we will study a more general operator, that is, the $\MP$-ray transform on tensors (see Definition \ref{def:mp_ray}).

Assuming that $\MP$-geodesics of energy $k$ are parametrized by $\partial_{+}S^{k}M$, we obtain a map $I \colon C^{\infty}(SM) \to C(\partial_{+}S^{k}M)$ given by
\[ If(x,v)=\int_{0}^{\tau(x,v)}f(\phi_{t}(x,v))dt, \qquad (x,v) \in \partial_{+}S^{k}M. \]

In the space of real valued function on $\partial_{+}S^{k}M$, we define the norm
\[ \norm{f}^{2}=\int_{\partial_{+}S^{k}M}|f|^{2}d\mu_{k}, \]
and the corresponding inner product. Here $d\mu_{k}(x,v)=(v,\nu(x))_{g} d\Sigma_{k}^{2n-2}$, where $\Sigma_{k}^{2n-2}$ is the Liouville measure on $\partial_{+}S^{k}M$. See Appendix \ref{sec:app}. The corresponding Hilbert space will be denoted by $L_{\mu}^{2}(\partial_{+}SM)$.

\begin{lemma}
The operator $I$ extends to a bounded operator
\[ I \colon L^{2}(SM) \to L_{\mu}^{2}(\partial_{+}SM). \]
\end{lemma}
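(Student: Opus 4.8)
The plan is to prove the uniform estimate $\norm{If} \le C\norm{f}_{L^{2}(S^{k}M)}$ on the dense subspace $C^{\infty}(S^{k}M) \subset L^{2}(S^{k}M)$, and then extend $I$ by density. The two ingredients I would combine are the Cauchy--Schwarz inequality applied to the defining integral of $I$, and the Santaló formula for $\MP$-systems established in Appendix \ref{sec:app}.

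First I would apply Cauchy--Schwarz to the integral defining $If$. For $(x,v) \in \partial_{+}S^{k}M$,
\[ \abs{If(x,v)}^{2} = \abs{\int_{0}^{\tau(x,v)} f(\phi_{t}(x,v))\,dt}^{2} \leq \tau(x,v) \int_{0}^{\tau(x,v)} \abs{f(\phi_{t}(x,v))}^{2}\,dt. \]
Since $(M,g,\alpha,U)$ is simple, $\partial_{+}S^{k}M$ is compact and $\tau \colon \partial_{+}S^{k}M \to \R$ is smooth, hence bounded, say $\tau \le T_{0}$. Integrating the previous inequality against $d\mu_{k}$ and using this bound yields
\[ \norm{If}^{2} \leq T_{0} \int_{\partial_{+}S^{k}M} \left( \int_{0}^{\tau(x,v)} \abs{f(\phi_{t}(x,v))}^{2}\,dt \right) d\mu_{k}(x,v). \]

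The key step is to recognize the right-hand side via Santaló's formula, which asserts that for any $h \in C(S^{k}M)$,
\[ \int_{\partial_{+}S^{k}M} \int_{0}^{\tau(x,v)} h(\phi_{t}(x,v))\,dt\, d\mu_{k}(x,v) = \int_{S^{k}M} h \, d\Sigma_{k}^{2n-1}. \]
Applying this with $h = \abs{f}^{2}$ gives $\norm{If}^{2} \le T_{0}\int_{S^{k}M}\abs{f}^{2}\,d\Sigma_{k}^{2n-1} = T_{0}\norm{f}_{L^{2}(S^{k}M)}^{2}$, so $\norm{If} \le \sqrt{T_{0}}\,\norm{f}_{L^{2}(S^{k}M)}$ on $C^{\infty}(S^{k}M)$. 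As smooth functions are dense in $L^{2}(S^{k}M)$ and the bound is uniform, $I$ extends uniquely to a bounded operator on all of $L^{2}(S^{k}M)$.

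The only substantive input is Santaló's formula, so the main obstacle is really relegated to the appendix; everything else is a routine application of Cauchy--Schwarz together with the compactness-driven boundedness of the exit time $\tau$. Once the change-of-variables identity is in hand, the estimate is immediate, and the extension by density is standard.
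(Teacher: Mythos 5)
Your overall strategy (Cauchy--Schwarz in $t$, bound the exit time by compactness and simplicity, convert the boundary-fibered double integral to an integral over $S^{k}M$ via Santal\'o, then extend by density) is exactly the paper's strategy. However, there is a genuine gap in the key step: the Santal\'o formula you invoke is the \emph{unweighted} identity
\[
\int_{\partial_{+}S^{k}M} \int_{0}^{\tau(x,v)} h(\phi_{t}(x,v))\,dt\, d\mu_{k}(x,v) = \int_{S^{k}M} h \, d\Sigma_{k}^{2n-1},
\]
and this is not what holds for $\MP$-systems with the measures as normalized in the paper. The version actually established (Proposition \ref{prop:santalo}) carries weights: writing $P(x)=2(k-U(x))$, one has
\[
\int_{S^{k}M} h \, d\Sigma_{k}^{2n-1}=\int_{\partial_{+}S^{k}M} \left( \int_{0}^{\tau(x,v)} P\bigl(\sigma_{x,v}(t)\bigr)^{\frac{1}{2}}\, h\bigl(\sigma_{x,v}(t),\dot{\sigma}_{x,v}(t)\bigr)\, dt\right) P(x)^{-1}\,d\mu_{k}.
\]
These factors are not an artifact of the proof: the $\MP$-flow is related to the magnetic flow of the reduced system $(2(k-U)g,\alpha)$ by a time reparametrization with $ds=P\,dt$, and the fiber spheres of $S^{k}M$ have $x$-dependent radii, so the clean measure-preservation statement underlying the Riemannian and magnetic Santal\'o formulas fails for $d\Sigma_{k}^{2n-1}$ and $d\mu_{k}$ as defined. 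Quoting the unweighted identity therefore uses a fact that is not available (and is false for these measures), so as written the "immediate" final estimate does not follow.

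The gap is easily repaired, and the repair is precisely what the paper does: since $k>\max_{M}U$ and $M$ is compact, $P$ is bounded above and below by positive constants, so in your displayed bound you insert $P^{\frac{1}{2}}(\sigma(t))P^{-\frac{1}{2}}(\sigma(t))$ inside the $t$-integral and $P(x)P^{-1}(x)$ outside, estimate the extraneous factors $P^{-\frac{1}{2}}(\sigma(t))P(x)$ by a constant, and then apply the \emph{weighted} formula to what remains. This yields $\norm{If}^{2}\leq \tilde{C}\,\norm{f}_{L^{2}(S^{k}M)}^{2}$ with $\tilde{C}$ depending additionally on $\max_{M}P$ and $\min_{M}P$, after which your density argument goes through unchanged.
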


\begin{proof}
By Cauchy--Schwarz inequality, for $(x,v) \in \partial_{+}S^{k}M$ we have
\[ (If (x,v))^{2} \leq C \int_{0}^{\tau(x,v)}f(\phi_{t}(x,v))^{2}dt,\]
where 
\[ C=\max_{(x,v) \in \partial_{+}S^{k}M}\tau(x,v). \]
Let $P(x)=2(k-U(x))$. Then, applying Santaló's formula (Lemma \ref{prop:santalo}) we obtain
\begin{align*}
    &\int_{\partial_{+}S^{k}M}(If)^{2}d\mu_{k} \\
    \leq & C \int_{\partial_{+}S^{k}M} \int_{0}^{\tau(x,v)} f(\phi_{t}(x,v))^{2}dt d\mu_{k} \\
    =& C \int_{\partial_{+}S^{k}M} \left( \int_{0}^{\tau(x,v)} f(\phi_{t}(x,v))^{2} P^{\frac{1}{2}}(\sigma(t)) P^{-\frac{1}{2}}(\sigma(t)) dt \right) P(x) P^{-1}(x)d\mu_{k} \\
    \leq & \tilde{C} \int_{\partial_{+}S^{k}M} \left( \int_{0}^{\tau(x,v)} f(\phi_{t}(x,v))^{2} P^{\frac{1}{2}}(\sigma(t))dt \right) P^{-1}(x)d\mu_{k} \\
    =&\tilde{C} \int_{S^{k}M}f^{2}d\Sigma^{2n-1},
\end{align*}
where
\[ \tilde{C}=C\max_{x \in M}P^{\frac{1}{2}}(x). \]
\end{proof}

\subsection{Linearization}

We will show that the $\MP$-ray transform is the linearization of the boundary action function.

Let $(g,\alpha,U)$ be a simple $\MP$-system on $M$. Take $\eps>0$ small enough so that every $\MP$-system $(g+h,\alpha+\beta,U+V)$ satisfying
\begin{equation} \label{eq:small_pert}
    \norm{h}_{C^{2}} \leq \eps,\qquad \norm{\beta}_{C^{1}} \leq \eps \qquad \norm{V}_{C^{1}}\leq \eps,
\end{equation}
is simple. Given $h$, $\beta$, and $V$ satisfying \eqref{eq:small_pert}, consider the 1-parameter family of $\MP$-simple systems $(g^{s},\alpha^{s},U^{s})$, where 
\[ g^{s}=g+sh, \qquad \alpha^{s}=\alpha+s\beta, \qquad U^{s}=U+sV, \]
and $s \in [0,1]$.

\begin{lemma} \label{lemma:lin}
For $x, y \in \partial M$,
\[
\frac{d \A_{g^s, \alpha^s,U^{s}} (x, y)}{d s}= \frac{1}{2} \int_{\sigma_s}\langle h, \dot{\sigma}_s^2\rangle-\int_{\sigma_s} \beta-\int_{\sigma_{s}}V,
\]
where $\sigma_s \colon [0,T_{s}] \to M$ is the $\MP$-geodesic with constant energy $k$ from $x$ to $y$ with respect to $(g^s, \alpha^s,U^{s})$.    
\end{lemma}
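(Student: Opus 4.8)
The plan is to exploit the fact that $\MP$-geodesics minimize the time-free action, so that $\A_{g^{s},\alpha^{s},U^{s}}(x,y)$ coincides with the action of the minimizing geodesic, and then to differentiate by an envelope-theorem argument. Writing the time-free action of energy $k$ for the $s$-th system as
\[ \A^{s}(\sigma)=\frac{1}{2}\int_{0}^{T}|\dot{\sigma}|_{g^{s}}^{2}\,dt+kT-\int_{0}^{T}\big(\alpha^{s}(\sigma,\dot{\sigma})+U^{s}(\sigma)\big)\,dt, \]
the definition \eqref{eq:mane_act} together with the minimizing property (\cite{az}*{Appendix A.1}) gives $\A_{g^{s},\alpha^{s},U^{s}}(x,y)=\A^{s}(\sigma_{s})$, where $\sigma_{s}\colon[0,T_{s}]\to M$ is the energy-$k$ $\MP$-geodesic from $x$ to $y$. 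Since the systems are simple for $s$ in a neighborhood of the interval and the exit time $\tau$ is smooth (\cite{az}*{Lemma~A.3}), the pair $(\sigma_{s},T_{s})$ depends smoothly on $s$, which justifies differentiating under the integral sign.

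First I would apply the chain rule to $s\mapsto\A^{s}(\sigma_{s})$, splitting the derivative into the \emph{explicit} $s$-dependence of the functional (through $g^{s},\alpha^{s},U^{s}$) and the \emph{implicit} dependence through the curve $\sigma_{s}$ and its time $T_{s}$. The crucial point is that the implicit part vanishes: $\sigma_{s}$ is a critical point of $\A^{s}$ among curves joining the fixed endpoints $x,y\in\partial M$, with the time $T$ free. Its Euler--Lagrange equation is precisely the $\MP$-geodesic equation \eqref{eq:mp-geo}, so the first variation of $\A^{s}$ in the direction $\partial_{s}\sigma_{s}$ is zero, while stationarity in the free time corresponds exactly to the energy constraint $E=k$, killing the contribution of $\dot{T}_{s}$. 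Because the endpoints do not move with $s$, the boundary terms produced by the integration by parts also drop out. This envelope step is where the bulk of the care is needed: one must confirm smoothness of $s\mapsto(\sigma_{s},T_{s})$ from simplicity and verify that all curve- and time-variation contributions cancel.

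It then remains to compute the explicit partial derivative $\partial_{s}\A^{s}(\sigma_{s})$, holding the curve and time fixed. From $g^{s}=g+sh$, $\alpha^{s}=\alpha+s\beta$, and $U^{s}=U+sV$ one reads off $\partial_{s}|\dot{\sigma}_{s}|_{g^{s}}^{2}=\langle h,\dot{\sigma}_{s}^{2}\rangle$, $\partial_{s}\alpha^{s}=\beta$, and $\partial_{s}U^{s}=V$, while the term $kT$ carries no explicit $s$. Substituting yields
\[ \frac{d\A_{g^{s},\alpha^{s},U^{s}}(x,y)}{ds}=\frac{1}{2}\int_{0}^{T_{s}}\langle h,\dot{\sigma}_{s}^{2}\rangle\,dt-\int_{0}^{T_{s}}\beta(\sigma_{s},\dot{\sigma}_{s})\,dt-\int_{0}^{T_{s}}V(\sigma_{s})\,dt, \]
which is the asserted identity once the integrals are rewritten in the notation $\int_{\sigma_{s}}$. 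The main obstacle is the rigorous justification of the first-variation step, so that only the explicit derivative of the Lagrangian survives; the final term-by-term computation is then routine.
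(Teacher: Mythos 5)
Your proposal is correct and takes essentially the same approach as the paper: both arguments write $\A_{g^s,\alpha^s,U^s}(x,y)$ as the action of the minimizing $\MP$-geodesic $\sigma_s$ and use an envelope argument so that the implicit dependence through $(\sigma_s,T_s)$ contributes nothing, leaving only the explicit derivative of the integrand, which is computed term by term exactly as you do. The only cosmetic difference is that the paper kills the implicit term directly from minimality (for fixed $\tau$ the function $s\mapsto \A_{g^\tau,\alpha^\tau,U^\tau}(\sigma_s)$ has a minimum at $s=\tau$, so its derivative vanishes there), whereas you unpack the same fact via the Euler--Lagrange equation, free-time stationarity, and vanishing boundary terms.
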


Our proof is based in the proof of a similar result \cite{dpsu}*{Lemma~3.1}.

\begin{proof} Define
\[
\varphi(s, \tau):=\A_{g^\tau, \alpha^\tau,U^{\tau}}(\sigma_s)=\frac{1}{2} \int_0^{T_s}|\dot{\sigma}_s(t)|_{g^\tau}^2 d t+k T_s-\int_{\sigma_s} \alpha^\tau-\int_{\sigma_{s}}U^{\tau}.
\]
Then
\[
\frac{d \A_{g^{s}, \alpha^{s},U^{s}}(x, y)}{d s}=\frac{\partial \varphi}{\partial s}(s, s)+\frac{\partial \varphi}{\partial \tau}(s, s) .
\]
By \cite{az}*{Lemma~A.1}, $\MP$-geodesic with energy $k$ minimize the time free action. Therefore, for every fixed $\tau$, $\A_{g^{\tau},\alpha^{\tau},U^{\tau}}(\gamma_{s})$ has a minimum at $s=\tau$. Thus, 
\[ \frac{\partial \varphi}{\partial s}(s, s)=0. \]
On the other hand, 
\begin{align*}
    \frac{\partial \varphi}{\partial \tau}=&\frac{1}{2}\int_{0}^{T_{s}} \frac{\partial}{\partial \tau}(g_{ij}(\sigma_{s}(t))+\tau h_{ij}(\sigma_{s}(t)))\dot{\sigma}_{s}^{i}(t)\dot{\sigma}_{s}^{j}(t) dt \\
    &-\int_{0}^{T_{s}} \frac{\partial}{\partial \tau}(\alpha_{i}(\sigma_{s}(t))+\tau \beta_{i}(\sigma_{s}(t)))\dot{\sigma}_{s}^{i}(t) dt \\
    &-\int_{0}^{T_{s}}\frac{\partial}{\partial \tau} (U(\sigma_{s}(t))+\tau V(\sigma_{s}(t)))dt \\
    =&\frac{1}{2}\int_{0}^{T_{s}} h_{ij}(\sigma_{s}(t))\dot{\sigma}_{s}^{i}(t)\dot{\sigma}_{s}^{j}(t) dt -\int_{0}^{T_{s}} \beta_{i}(\sigma(t))\dot{\sigma}_{s}^{i}(t) dt-\int_{0}^{T_{s}}V(\sigma_{s}(t))dt.
\end{align*}
\end{proof}

In view of Lemma \ref{lemma:lin}, we are interested in $I$ applied to functions of the form 
\begin{equation} \label{eq:poly}
    \psi(x,v)=h_{ij}(x)v^{i}v^{j}+\beta_{i}(x)v^{i}+V(x).
\end{equation}
This motivates the following definition

\begin{defin} \label{def:mp_ray}
Let $\mathbf{f}=[h,\beta,V] \in \mathbf{L}^{2}(M)$. For $(x,v) \in \partial_{+}S^{k}M$ we define the $\MP$\emph{-ray transform} by
\begin{equation} \label{eq:def_I}
    I[h,\beta,V](x,v)=\int_{0}^{\tau(x,v)} \left\lbrace h_{ij}(\sigma(t))\dot{\sigma}^{i}(t)\dot{\sigma}^{j}(t)+\beta_{i}(\sigma(t))\dot{\sigma}^{i}+V(\sigma(t)) \right\rbrace dt,
\end{equation}
where $\sigma \in S^{k}M$ is the only $\MP$-geodesic with $\sigma(0)=x$ and $\dot{\sigma}(0)=v$. 
\end{defin}

Here, $\mathbf{L}^{2}(M)$ is the space of square integrable ordered triples $\mathbf{f}=[h,\beta,V]$, where $h$ is a symmetric 2-tensor, $\beta$ is a 1-form and $V$ is a smooth function, endowed with the norm
\begin{equation} \label{eq:l2}
    \norm{\mathbf{f}}_{\mathbf{L}^{2}(M)}^{2}=\int_{M} \left\lbrace |h|_{g}^{2}+|\beta|_{g}^{2}+|V|^{2} \right\rbrace d\vol.
\end{equation}
We also consider the space $\mathcal{L}^{2}(M)$ of square integrable ordered triples $\mathbf{w}=[v,\varphi,\eta]$ (where $v$ is a vector field, and $\varphi$ and $\eta$ are smooth functions), endowed with the norm
\begin{equation} \label{eq:call2}
    \norm{\mathbf{w}}_{\mathcal{L}^{2}(M)}^{2}=\int_{M} (|v|_{g}+|\varphi|^{2}+|\eta|^{2})d\vol.
\end{equation}
In a similar way we define $\mathcal{H}_{0}^{1}(M)$. Recall that given an $\MP$-system of energy $k$, we have its reduction $(G,\alpha)$ where $G=2(k-\alpha)g$. We will consider the spaces associated to this magnetic systems, and we will use the same notation as before, but with the subscript $\mathcal{M}$. We will consider
\[ \LL^{2}_{\mathcal{M}}(M), \quad \cal^{2}_{\mathcal{M}}(M), \quad \mathcal{H}_{0,\mathcal{M}}^{1}(M). \]

\subsection{Relation with the magnetic ray transform}

As it known, there is a relation between boundary actions of $\MP$-systems and the boundary action of their reductions (see Lemma \ref{lemma:basics_mp}), we would like to find a relation between the $\MP$-ray transform and the magnetic ray transform (see \cite{dpsu}*{Equation 3.14} for the definition). To obtain this relation, we define 
\begin{align*}
    \Phi \colon \LL^{2}(M) & \to \LL_{\mathcal{M}}^{2}(M), \\
    [h,\beta,V] & \mapsto [2(k-U)h+gV,\beta].
\end{align*}
Is easy to see that $\Phi$ is a linear map. Furthermore, we have the following.

\begin{lemma} \label{lemma:Phi}
    $\Phi$ is a surjective linear map with kernel
    \begin{equation} \label{eq:ker_phi}
        \ker \Phi=\{ [h,\beta,V] \in \LL^{2}(M): h=-\eta g,\, \beta=0,\, V=2\eta(k-U), \, \eta \in L^{2}(M)\}.
    \end{equation}
\end{lemma}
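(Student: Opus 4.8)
The map $\Phi$ sends $[h,\beta,V]$ to $[2(k-U)h+gV,\beta]$, so I want to show it is onto and compute its kernel. My plan is to treat the two coordinates of the target essentially independently, since the second coordinate of $\Phi$ is simply $\beta \mapsto \beta$.

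\textbf{Surjectivity.} Let $[\tilde h,\tilde\beta] \in \LL_{\mathcal{M}}^{2}(M)$ be arbitrary, where $\tilde h$ is a symmetric $2$-tensor and $\tilde\beta$ is a $1$-form. First I would set $\beta=\tilde\beta$, which handles the second slot exactly. For the first slot I must solve $2(k-U)h+gV=\tilde h$ in the unknowns $(h,V)$. The simplest choice is to take $V=0$ and set $h=\frac{1}{2(k-U)}\tilde h$; this is well defined because the simplicity assumption forces $k>\max_{M}U$, so $2(k-U)$ is a strictly positive smooth function bounded below, whence $h$ is again a square-integrable symmetric $2$-tensor. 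Then $\Phi[h,0,\tilde\beta]=[\tilde h,\tilde\beta]$, establishing surjectivity. (One could equally absorb the freedom into $V$; the point is only that $2(k-U)$ is invertible.)

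\textbf{Kernel.} A triple $[h,\beta,V]$ lies in $\ker\Phi$ iff $\beta=0$ and $2(k-U)h+gV=0$, i.e.\ $h=-\frac{V}{2(k-U)}g$. Setting $\eta:=\frac{V}{2(k-U)} \in L^{2}(M)$ gives precisely $h=-\eta g$ and $V=2\eta(k-U)$, matching \eqref{eq:ker_phi}. Conversely every such triple manifestly maps to zero. The only thing to check is that the parametrization by $\eta\in L^{2}(M)$ is genuine, which again follows from $2(k-U)$ being bounded away from $0$ and from above, so that $V\in L^{2}$ iff $\eta\in L^{2}$.

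\textbf{Main obstacle.} There is no real analytic difficulty here; the whole statement reduces to the invertibility of the scalar multiplier $2(k-U)$, guaranteed by $k>\max_{M}U$. The one point deserving care is keeping the function spaces honest: one must confirm that dividing or multiplying by the smooth, strictly positive, bounded function $2(k-U)$ preserves membership in $\LL^{2}(M)$ and $L^{2}(M)$, so that the constructed preimages and the kernel parametrization land in the correct spaces. I would state this boundedness-above-and-below observation once at the start and then apply it in both the surjectivity and the kernel computations.
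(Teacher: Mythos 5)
Your proof is correct and follows essentially the same route as the paper: surjectivity via the preimage $[\tfrac{1}{2(k-U)}\tilde h,\tilde\beta,0]$, and the kernel by solving $2(k-U)h+gV=0$ to get $h=-\tfrac{V}{2(k-U)}g$, with $\eta=\tfrac{V}{2(k-U)}$ giving the stated parametrization. Your added remark that $2(k-U)$ is bounded above and below (so the constructions stay in $\LL^{2}(M)$ and $L^{2}(M)$) is a small point the paper leaves implicit, but it changes nothing substantive.
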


\begin{proof}
    Let $[h,\beta] \in \LL_{\mathcal{M}}^{2}(M)$. Then, $\mathbf{f}=[\frac{1}{2(k-U)}h,\beta,0]$ satisfies $\Phi(\mathbf{f})=[h,\beta]$. This shows that the map is surjective.
    
    Now suppose that $\Phi(\mathbf{f})=0$, where $\mathbf{f}=[h,\beta,V] \in \LL^{2}(M)$. Then, $\beta=0$ and 
    \[ 2(k-U)h+gV=0, \]
    that is, $h=-\frac{g}{2(k-U)}V$. So, $\mathbf{f}=[-\frac{g}{2(k-U)}V,0,V]$. The reciprocal is trivial.
\end{proof}

We also make the following observation.

\begin{lemma} \label{lemma:i_phi}
    Let $\mathbf{f} \in \ker \Phi$, then $I\mathbf{f}=0$.
\end{lemma}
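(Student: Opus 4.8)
The plan is to show that any element $\mathbf{f} \in \ker \Phi$ integrates to zero along every $\MP$-geodesic. By Lemma~\ref{lemma:Phi}, such an $\mathbf{f}$ has the explicit form $\mathbf{f}=[-\eta g,\,0,\,2\eta(k-U)]$ for some $\eta \in L^2(M)$, so the integrand appearing in Definition~\ref{def:mp_ray} becomes
\[
-\eta(\sigma(t))\,g_{ij}(\sigma(t))\dot{\sigma}^i(t)\dot{\sigma}^j(t)+2\eta(\sigma(t))(k-U(\sigma(t))).
\]
The key observation is that this collapses using the conservation of energy. First I would recall that along any $\MP$-geodesic of energy $k$ one has $\frac{1}{2}|\dot{\sigma}(t)|_g^2+U(\sigma(t))=k$, established in the introduction as an integral of motion, which gives pointwise
\[
g_{ij}(\sigma(t))\dot{\sigma}^i(t)\dot{\sigma}^j(t)=|\dot{\sigma}(t)|_g^2=2(k-U(\sigma(t))).
\]

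Substituting this identity into the integrand, the two terms cancel exactly:
\[
-\eta(\sigma(t))\cdot 2(k-U(\sigma(t)))+2\eta(\sigma(t))(k-U(\sigma(t)))=0.
\]
Hence the integrand vanishes identically along $\sigma$, and therefore $I\mathbf{f}(x,v)=\int_0^{\tau(x,v)} 0\, dt=0$ for every $(x,v)\in\partial_+ S^k M$, which is precisely $I\mathbf{f}=0$.

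There is essentially no obstacle here; the entire content is the energy relation $|\dot\sigma|_g^2=2(k-U)$, which is exactly what forces the kernel of $\Phi$ described in \eqref{eq:ker_phi} to land in the kernel of $I$, and this is the whole point of introducing $\Phi$. The only care required is bookkeeping: one should note that the components of $\mathbf{f}$ in \eqref{eq:ker_phi} were written with $h=-\eta g$ and $V=2\eta(k-U)$, matching the reparametrization given in Lemma~\ref{lemma:Phi} (where $V$ played the role of the free parameter), so that the cancellation of the degree-two and degree-zero pieces is verified directly from the definition of $\Phi$ rather than needing any further structure. No regularity beyond $\eta\in L^2(M)$ is needed since the computation is pointwise along each geodesic.
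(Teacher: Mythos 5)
Your proof is correct and follows essentially the same route as the paper: write $\mathbf{f}=\eta[-g,0,2(k-U)]$ via Lemma~\ref{lemma:Phi}, then use the energy identity $g_{ij}(\sigma)\dot{\sigma}^{i}\dot{\sigma}^{j}=2(k-U(\sigma))$ along $\MP$-geodesics of energy $k$ to see that the integrand vanishes pointwise. The paper simply factors out $\eta(\sigma(t))$ before invoking this identity, which is only a cosmetic difference.
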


\begin{proof}
    Since $\mathbf{f} \in \ker \Phi$, we can write $\mathbf{f}=\eta[-g,0,2(k-U)]$. For $(x,v) \in \partial_{+}S^{k}M$, let $\sigma$ be the unique geodesic with energy $k$ such that $(\sigma(0),\dot{\sigma}(0))=(x,v)$. Then, 
    \begin{align*}
        I\mathbf{f}(x,v)&=\int_{0}^{\tau(x,v)} \{-\eta(\sigma(t))g_{ij}(\sigma(t)) \dot{\sigma}^{i}(t) \dot{\sigma}^{j}(t)+2\eta(k-U(\sigma(t))) \} dt \\
        &=\int_{0}^{\tau(x,v)} \eta(\sigma(t))\{-g_{ij}(\sigma(t)) \dot{\sigma}^{i}(t) \dot{\sigma}^{j}(t)+2(k-U(\sigma(t))) \} dt \\
        &=\int_{0}^{\tau(x,v)} \eta(\sigma(t))0 dt \\
        &=0.
    \end{align*}
\end{proof}

We will write $I$ for the $\MP$-ray transform over an $\MP$-geodesic $\sigma$, and $I_{\mathcal{M}}$ to denote the magnetic ray transform of its reduced magnetic system over the magnetic geodesic $\gamma(s)=\sigma(t(s))$. As was done in \cite{iw}*{Lemma A.1} for functions, we have the following relation between these transforms for tensors.

\begin{prop} \label{prop:rel_mpray_magray}
Let $\mathbf{f} \in \LL^{2}(M)$. For $(x,v) \in \partial_{+}S^{k}M$ we have
\[ I\mathbf{f}(x,v)=I_{\mathcal{M}}\Phi(\mathbf{f}) \left( x, \frac{v}{2(k-U(x))} \right). \]
\end{prop}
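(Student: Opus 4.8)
The plan is to compute the right-hand side directly by pulling the magnetic ray transform back along the reparametrization $\gamma(s)=\sigma(t(s))$ that turns the $\MP$-geodesic $\sigma$ into its unit-speed magnetic reparametrization, and then to use the constancy of the energy to recover the potential term. First I would record the consequences of the change of parameter $s(t)=\int_{0}^{t}2(k-U(\sigma))\,dt'$ coming from Lemma \ref{lemma:basics_mp}(1). Since $ds/dt=2(k-U(\sigma(t)))$, the chain rule gives
\[ \dot{\gamma}(s)=\frac{\dot{\sigma}(t)}{2(k-U(\sigma(t)))}, \]
and evaluating at $s=t=0$ shows that $\gamma$ issues from the point $\left(x,\,v/(2(k-U(x)))\right)$, which is exactly where the right-hand side is evaluated; moreover $s$ ranges over $[0,\tau_{\mathcal{M}}]$ precisely as $t$ ranges over $[0,\tau(x,v)]$.

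Next I would write out $I_{\mathcal{M}}\Phi(\mathbf{f})$ along $\gamma$. Writing $\Phi(\mathbf{f})=[H,\beta]$ with $H_{ij}=2(k-U)h_{ij}+g_{ij}V$, the magnetic integrand is $H_{ij}\dot{\gamma}^{i}\dot{\gamma}^{j}+\beta_{i}\dot{\gamma}^{i}$. Substituting $\dot{\gamma}^{i}=\dot{\sigma}^{i}/(2(k-U))$ and then changing variables by $ds=2(k-U)\,dt$, the various powers of $2(k-U)$ combine: the $2(k-U)h_{ij}$ part of $H$ returns $h_{ij}\dot{\sigma}^{i}\dot{\sigma}^{j}$, the $\beta$ part returns $\beta_{i}\dot{\sigma}^{i}$, and the $g_{ij}V$ part returns $\frac{V}{2(k-U)}|\dot{\sigma}|_{g}^{2}$.

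The crux, and the only step with real content, is the treatment of this last term. Here I would invoke the constancy of the energy along $\sigma$: since $E(\sigma,\dot{\sigma})=\tfrac{1}{2}|\dot{\sigma}|_{g}^{2}+U(\sigma)=k$, we have $|\dot{\sigma}|_{g}^{2}=2(k-U(\sigma))$, whence $\frac{V}{2(k-U)}|\dot{\sigma}|_{g}^{2}=V$. The three surviving terms therefore assemble into exactly the integrand $h_{ij}\dot{\sigma}^{i}\dot{\sigma}^{j}+\beta_{i}\dot{\sigma}^{i}+V$ appearing in Definition \ref{def:mp_ray}, and integrating over $[0,\tau(x,v)]$ yields the claimed identity $I\mathbf{f}(x,v)=I_{\mathcal{M}}\Phi(\mathbf{f})\left(x,v/(2(k-U(x)))\right)$.

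I do not expect a genuine obstacle beyond careful bookkeeping of the conformal factors; the conceptual point to make explicit is that the map $\Phi$ can absorb the $\MP$-potential $V$ into the magnetic $2$-tensor precisely because, on the energy level $S^{k}M$, the constancy of energy converts the conformal contribution $g_{ij}V$ back into the scalar $V$. This is also consistent with Lemma \ref{lemma:i_phi}, which exhibits the same mechanism as the vanishing of $I$ on $\ker\Phi$.
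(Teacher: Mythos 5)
Your proposal is correct and is essentially the paper's own proof: both rest on exactly the two ingredients you identify, the Jacobi--Maupertuis reparametrization $\gamma(s)=\sigma(t(s))$ with $ds=2(k-U)\,dt$ from Lemma \ref{lemma:basics_mp}(1), and the energy identity $|\dot{\sigma}|_{g}^{2}=2(k-U(\sigma))$ that trades the scalar $V$ for the conformal term $\frac{g}{2(k-U)}V$. The only difference is direction of travel — the paper starts from $I\mathbf{f}$, absorbs $V$ into a $2$-tensor, and changes variables to reach $I_{\mathcal{M}}\Phi(\mathbf{f})$, while you unpack $I_{\mathcal{M}}\Phi(\mathbf{f})$ and recover the $\MP$-integrand — which is the same computation read backwards.
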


\begin{proof}
Let $\sigma=\sigma(t)$ be an $\MP$-geodesic with $(\sigma(0),\dot{\sigma}(0))=(x,b)$. Since $E(\sigma,\dot{\sigma})=k$, then     
\[ k=\frac{1}{2}g_{ij}(\sigma)\dot{\sigma}^{i}\dot{\sigma}^{j}+U(\sigma). \]
So, the $\MP$-ray transform takes the form
\begin{equation} \label{eq:mag}
    I[h,\beta,V]=\int_{\sigma} \left\langle h+\frac{g}{2(k-U)}V,\dot{\sigma}^{2} \right\rangle+\int_{\sigma}\beta.
\end{equation}
From Lemma \ref{lemma:basics_mp}, we know that $\gamma(s)=\sigma(t(s))$ is a magnetic geodesic of speed one of the reduced system $(G,\alpha)$ with
\[ \frac{d\gamma}{ds}=\frac{d \sigma}{d t} \frac{dt}{ds}=\frac{d\sigma}{dt} \frac{1}{2(k-U)}. \]
Hence,
\begin{align*}
    I\mathbf{f}(x,v)&=\int_{\sigma} \left\langle h+\frac{g}{2(k-U)}V,\dot{\sigma}^{2} \right \rangle +\int_{\sigma} \beta \\
    &=\int_{\gamma}  2(k-U) \left\langle h+\frac{g}{2(k-U)}V,\dot{\gamma}^{2} \right \rangle +\int_{\gamma} \beta \\
    &=I_{\mathcal{M}} \Phi(\mathbf{f})\left( x, \frac{v}{2(k-U(x))} \right).
\end{align*}
\end{proof}

This can be thought as a linearized version of Lemma \ref{lemma:basics_mp} part (3).

\section{Potential triples} \label{sec:potential}

In this section we define the notion of potential for the $\MP$-tray transform, and we study its relation with the potentials of $I_{\mathcal{M}}$. 

\subsection{The definition of potential for the MP-ray transform}

One option to understand potentials it to look at polynomials that are on the image of the generator of the flow. Recall that the generator of the $\MP$-flow is given by
\[ G_{\MP}(x,v)=G_{\mu}(x,v)-g^{ij}\partial_{x^{i}} U(x) \frac{\partial}{\partial v^{j}}, \]
where $G_{\mu}$ is the generator of the magnetic flow. Explicitly, 
\[ G_{\MP}(x,v)=v^{i}\frac{\partial}{\partial x^{i}}-\Gamma_{jk}^{i} v^{j}v^{k} \frac{\partial}{\partial v^{i}}+Y_{i}^{j}(x)v^{i}\frac{\partial}{\partial v^{j}}-g^{ij}\partial_{x^{i}} U(x) \frac{\partial}{\partial v^{j}}. \]
The generator can also be thought as a Hamiltonian or as a Lagrangian field, see \cite{mt23}*{Lemma A.1}. We the Lorentz force to a map between covectors as in \cite{dpsu}
\begin{align*}
    Y \colon T^{*}M & \to T^{*}M, \\
    u &\mapsto -Y_{i}^{j}u_{j}.
\end{align*}
Applying $G_{\MP}$ to a polynomial in $v$ of degree $1$ and we get
\begin{align*}
    G_{\MP}[u_{i}(x)v^{i}+\varphi(x)] &=G_{\mu}[u_{i}(x)v^{i}+\varphi(x)]-g^{kj} \partial_{x^{k}}U \frac{\partial}{\partial v^{j}} u_{i}v^{i}  \\
    &=(d^{s}u)_{ij}v^{i}v^{j}+(\varphi_{,j}-Y(u)_{j})v^{j}-g^{ij}u_{i}(\partial_{x^{j}}U).
\end{align*}
Assuming that this is equal to the polynomial of order $2$ as in \eqref{eq:poly}, and looking at the even and odd parts, we find
\begin{align*}
    h_{ij}v^{i}v^{j}+V&=(d^{s}u)_{ij}v^{i}v^{j}-(dU,u)_{g}, \\
    \beta_{i}&=\varphi_{,j}-Y(u)_{j}.
\end{align*}
From this, we see that 2-tensors and functions are coupled. Even if we use that $E(x,v)=k$, there is no clear way to separate them. If we force $h=d^{s}u$ and $V=-(dU,u)_{g}$, we find
\begin{equation}
    \begin{pmatrix}
    h \\ \beta \\ V
\end{pmatrix}=\mathbf{d}_{1}\begin{pmatrix}
    u \\ \varphi \\ \eta
\end{pmatrix}, \qquad \mathbf{d}_{1}:=\begin{pmatrix}
    d^{s} & 0 & 0 \\ -Y & d & 0 \\ -(dU,\cdot )_{g} & 0 & 0
\end{pmatrix},
\end{equation}
where $u$ is a covector, $\varphi$ and $\eta$ are functions. However, this operator $\dd_{1}$ does not take into account the relation between 2-tensors and functions. A better definition of potential if we one that we explore below.

\begin{rmk} \label{rmk:lost}
    From Lemma \ref{lemma:i_phi} and Proposition \ref{prop:rel_mpray_magray}, we see that the element $\mathbf{f}=\eta[-g,0,2(k-U)]$ still satisfies $I_{\mathcal{M}}\mathbf{f}=0$, but the corresponding polynomial associated to $\mathbf{f}$ is not always of the form $G_{\MP}\psi$. Indeed the equality reads
    \[ -\eta g_{ij}v^{i}v^{j}+\eta2(k-U)=(d^{s}u)_{ij}v^{i}v^{j}+(\varphi_{,j}-Y(u)_{j})v^{j}-(dU,u)_{g}. \]
    Looking at even and odd parts we find 
    \begin{align}
        -\eta g_{ij}v^{i}v^{j}+\eta2(k-U)&=(d^{s}u)_{ij}v^{i}v^{j}-(dU,u)_{g}, \label{eq:2ten} \\
        0&=-Y(u)+d\varphi.
    \end{align}
    Since $(x,v) \in S^{k}M$, equation \eqref{eq:2ten} becomes
    \[ (d^{s}u)_{ij}v^{i}v^{j}=\frac{(dU,u)_{g}}{2(k-U)}g_{ij}v^{i}v^{j}. \]
    Then, 
    \[ (d^{s}u)_{ij}=\frac{(dU,u)_{g}}{2(k-U)}g_{ij}, \]
    which can be written as
    \[\delta((2(k-U))^{\frac{n}{2}}u)=0,\]
    where $\delta$ is the divergence given by
    \[ \delta w=g^{ij}(\partial_{x^{i}}w_{j}-\Gamma_{ij}^{k}w_{k}). \]
    This phenomena seems to occurs because $G_{\MP}$ does not take into account the relation between 2-tensors and functions given by the energy.
\end{rmk}

We consider a second option. We linearize the gauge transformation group. Take two $\MP$-systems $(g,\alpha,U)$ and $(g',\alpha',U')$ that are $k$-gauge equivalent, i.e., $g'=\frac{1}{\mu}f^{*}g$, $\alpha'=f^{*}\alpha+d\varphi$, $U'=\mu (f^{*}U-k)+k$, where $f\colon M \to M$ is a diffeomorphism with $f|_{\partial M}=id_{\partial M}$, $\varphi \colon M \to \R$ is a smooth function with $\varphi|_{\partial M}=0$, and $\mu \in C^{\infty}(M)$ is a strictly positive function. We want to linearize this near $f=id_{M}$, $\varphi=0$, and $\mu=1$. So, let $f_{\tau}$ be a smooth family of such diffeomorphism, with $f_{0}=id_{M}$, let $\varphi^{\tau}$ be a smooth family of such function with $\varphi^{0}=0$, and let $\mu^{\tau}$ be a smooth family of such positive functions with $\mu^{0}=1$. Let $g^{\tau}=\frac{1}{\mu_{\tau}} f_{\tau}^{*}g$, $\alpha^{\tau}=f_{\tau}^{*}\alpha+d\varphi^{\tau}$, and $U^{\tau}=\mu^{\tau}(f_{\tau}^{*}U-k)+k$. We will compute the derivatives at $\tau=0$. It is well known (see \cite{sha_ray}*{Equation (3.1.5)}) that $\frac{d}{d\tau}|_{\tau=0}f_{\tau}^{*}g=2d^{s}u^{\flat}$, where $d^{s}u^{\flat}$ is the symmetric differential of $u^{\flat}$, and $u=\frac{df_{\tau}}{d\tau}|_{\tau=0}$. Here $\flat$ denoted the musical isomorphism $\flat \colon TM \to T^{*}M$ induced by $g$. So, 
\[ \frac{dg^{\tau}}{d\tau} \bigg|_{\tau=0}=-\frac{d \mu^{\tau}}{d\tau} \bigg|_{\tau=0}g+2d^{s} u^{\flat}. \]
Then, using the results in \cite{dpsu} we have:
\[ \frac{d\alpha^{\tau}}{d\tau} \bigg|_{\tau=0}=-Y(u^{\flat})+d((\alpha,u^{\flat})_{g}+\psi), \]
where $\psi=\frac{d\varphi^{\tau}}{d\tau}|_{\tau=0}$.
Finally, we compute
\begin{align*}
    \frac{dU^{\tau}}{d\tau} \bigg|_{\tau=0} &=\frac{d \mu^{\tau}}{d\tau}\bigg|_{\tau=0}(U-k)+\frac{d(f_{\tau}^{*}U )}{d\tau} \bigg|_{\tau=0} \\
    &=\frac{d \mu^{\tau}}{d\tau}\bigg|_{\tau=0}(U-k)+(dU,u^{\flat})_{g}.
\end{align*}
Letting $\eta:=\frac{d \mu^{\tau}}{d\tau}|_{\tau=0}$, we obtain
\[ \frac{d}{d\tau}\bigg|_{\tau=0} \left[\frac{1}{2}g^{\tau},-\alpha^{\tau},-U^{\tau} \right]=\dd_{2} [u^{\flat},-(\alpha,u^{\flat} )_{g}-\psi,\eta], \]
where
\[ \dd_{2}=\begin{pmatrix}
    d^{s} & 0 & -\frac{1}{2}g \\ -Y & d & 0 \\ -(dU,\cdot )_{g} & 0 & k-U
\end{pmatrix}. \]

What makes $\mathbf{d}_{1}$ different from $\mathbf{d}_{2}$ are the terms that appear on the last column of $\mathbf{d}_{2}$. This last row is what to take account of elements in $\ker \Phi$, see Lemma \ref{lemma:Phi} and Remark \ref{rmk:lost}. 

\begin{lemma}
Let $\mathbf{f}=[h,\beta,V] \in \mathbf{L}^{2}(M)$, and $\mathbf{w}=[v,\varphi,\eta]$, where $[u,\varphi,0] \in \mathcal{H}_{0}^{1}(M)$, $\eta \in L^{2}(M)$ and $\mathbf{f}_{i}=\mathbf{d}_{i}\mathbf{w}$ with $i \in \{1,2\}$. Then $I\mathbf{f}=0$.
\end{lemma}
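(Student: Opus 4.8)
The plan is to handle the two cases $i\in\{1,2\}$ together by reducing $i=2$ to $i=1$, and then to recognise $I\mathbf{f}_1$ as the integral of a total derivative along the $\MP$-flow. Write $\mathbf{w}=[u,\varphi,\eta]$ with $u$ a covector and $[u,\varphi,0]\in\mathcal{H}_0^1(M)$.

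First I would note that the third column of $\dd_1$ vanishes, so $\mathbf{f}_1=\dd_1\mathbf{w}$ is independent of $\eta$, and a direct subtraction of the two matrices gives
\[
\mathbf{f}_2-\mathbf{f}_1=\dd_2\mathbf{w}-\dd_1\mathbf{w}=\left[-\tfrac{1}{2}\eta g,\,0,\,(k-U)\eta\right]=\tfrac{\eta}{2}\,[-g,\,0,\,2(k-U)].
\]
By \eqref{eq:ker_phi} this triple lies in $\ker\Phi$ (with kernel parameter $\eta/2$), so Lemma \ref{lemma:i_phi} yields $I(\mathbf{f}_2-\mathbf{f}_1)=0$, whence $I\mathbf{f}_2=I\mathbf{f}_1$. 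It therefore suffices to prove $I\mathbf{f}_1=0$.

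Second, for the case $i=1$ I would invoke the computation of $G_{\MP}$ on linear polynomials carried out just before Remark \ref{rmk:lost}. Setting $\psi(x,v)=u_i(x)v^i+\varphi(x)$, that computation shows precisely that
\[
G_{\MP}\psi=(d^{s}u)_{ij}v^iv^j+(\varphi_{,j}-Y(u)_j)v^j-(dU,u)_g=h_{ij}v^iv^j+\beta_iv^i+V,
\]
where $[h,\beta,V]=\dd_1\mathbf{w}$; that is, the integrand defining $I\mathbf{f}_1$ is exactly $G_{\MP}\psi$ evaluated along the flow. Since $G_{\MP}$ is the infinitesimal generator of $\phi_t$, for an $\MP$-geodesic $\sigma$ with $(\sigma(0),\dot\sigma(0))=(x,v)$ one has $(G_{\MP}\psi)(\phi_t(x,v))=\tfrac{d}{dt}\psi(\phi_t(x,v))$, and the fundamental theorem of calculus gives
\[
I\mathbf{f}_1(x,v)=\int_0^{\tau(x,v)}\frac{d}{dt}\psi(\phi_t(x,v))\,dt=\psi(\phi_{\tau}(x,v))-\psi(x,v).
\]
Because both endpoints $\sigma(0)=x$ and $\sigma(\tau)$ lie on $\partial M$, and the boundary conditions $[u,\varphi,0]\in\mathcal{H}_0^1(M)$ force $u|_{\partial M}=0$ and $\varphi|_{\partial M}=0$, each evaluation of $\psi$ vanishes, giving $I\mathbf{f}_1=0$.

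The main obstacle I anticipate is regularity rather than algebra: since $[u,\varphi,0]$ is only assumed in $\mathcal{H}_0^1(M)$ and not smooth, the pointwise identity $G_{\MP}\psi=\tfrac{d}{dt}\psi\circ\phi_t$ and the vanishing of the boundary terms must be justified by density. The cleanest route is to approximate $(u,\varphi)$ in $\mathcal{H}_0^1(M)$ by smooth fields vanishing on $\partial M$, run the total-derivative argument in the smooth category, and pass to the limit using the boundedness of $I\colon L^2(SM)\to L^2_\mu(\partial_+SM)$ established earlier. Once these approximation points are in place, the purely algebraic content—the kernel reduction and the $G_{\MP}\psi$ identity—is immediate from the computations already recorded.
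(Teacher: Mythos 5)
Your proof is correct and takes essentially the same route as the paper's: both cases are handled by recognising $\dd_{1}\mathbf{w}$ as $G_{\MP}\psi$ for the linear polynomial $\psi=u_{i}v^{i}+\varphi$ vanishing on $\partial M$ (so the transform telescopes to boundary values), and by decomposing $\dd_{2}\mathbf{w}=\dd_{1}\mathbf{w}+\eta\left[-\tfrac{1}{2}g,0,k-U\right]$ with the second summand killed by Lemma \ref{lemma:i_phi}. Your closing density argument addressing the $\mathcal{H}_{0}^{1}$ regularity is a sensible refinement of a point the paper leaves implicit, but it does not change the approach.
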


\begin{proof}
If $\mathbf{f}_{1}=\mathbf{d}_{1}\mathbf{w}$, then $\mathbf{f}_{1}=G_{\MP}\psi$, where $\psi$ is a polynomial vanishing on the boundary. Then, 
\[ I\mathbf{f}_{1}=\psi|_{\partial M}=0. \]

On the other hand, if $\mathbf{f}_{2}=\mathbf{d}_{2}\mathbf{w}$, then
\[ \mathbf{f}_{2}=\mathbf{d}_{2}\mathbf{w}=\mathbf{d}_{1}\mathbf{w}+\eta \begin{pmatrix}
    -\frac{1}{2}g \\ 0 \\ k-U
\end{pmatrix}=:\mathbf{d}_{1}\mathbf{w}+\eta \mathbf{r}. \]
By the previous part, $I(\dd_{1}\mathbf{w})=0$. By Lemma \ref{lemma:i_phi}, we also have $I(\eta \mathbf{r})=0$. Therefore, $I(\mathbf{f}_{2})=0$ as well.
\end{proof}

Since $\dd_{2}$ is more general, we define the elements on its image as potentials.

\begin{defin}
    We say that an element $\mathbf{f}=[h,\beta,V] \in \LL^{2}(M)$ is a \emph{potential} if $\mathbf{f}=\dd \mathbf{w}$, where $\mathbf{w}=[u,\varphi,\eta]$, $[u,\varphi,0] \in \mathcal{H}_{0}^{1}(M)$, $\eta \in L^{2}(M)$ and $\dd:=\dd_{2}$.
\end{defin}

\begin{defin} \label{defin:s_inj}
    We say that $I$ is s-injective if $I\mathbf{f}=0$ implies that $\mathbf{f}$ is a potential.
\end{defin}

\subsection{Relation between MP-potentials and magnetic ones}

Recall that a potential for $I_{\mathcal{M}}$ is a pair such that 
\[ \begin{pmatrix}
    h \\ \beta
\end{pmatrix}=\dd_{\mathcal{M}} \begin{pmatrix}
    u \\ \varphi
\end{pmatrix},  \]
where 
\[ \dd_{\mathcal{M}}=\begin{pmatrix}
    d_{G}^{s} & 0 \\
    -Y_{G} & d
\end{pmatrix}. \]
Here $d_{G}^{s}$ is the symmetric differential with respect to the metric $G$, and $Y_{G}$ is the Lorentz force of $\Omega=d\alpha$ with respect to $G$. In this subsection we will find a relation between magnetic potentials, i.e., elements on the range of $\dd_{\mathcal{M}}$, and potentials for the $\MP$-ray transform. First, we will obtain a more explicit formula for $\dd_{\mathcal{M}}$ in terms of $Y_{g}$ and $d_{g}^{s}$ (here $Y_{g}$ is the Lorentz force of $\alpha$ with respect to $g$, while $d_{g}^{s}$ is the symmetric differential with respect to $g$). Using the definition of the Lorentz force, we obtain for any $v, w \in T_{x}M$
\[ (Y_{x,g}(v), w)_{g}=\Omega_{x}(v,w)=(Y_{x,G}(v),w)_{G}=2(k-U) (Y_{x,G}(v),w )_{g}.  \]
Hence, $Y_{G}=\frac{1}{2(k-U)}Y_{g}$. Now we would like to find $d_{G}^{s}$ in terms of $d_{g}^{s}$. Let $^{G}\nabla$ and $^{g}\nabla$ be the Levi-Civita connections with respect to the metrics $G$ and $g$, respectively. We use a similar notation for the Christoffel's symbols. Recall that these operators act on 1-tensors. Since
\[ (d_{G}^{s}u)_{ij}=\frac{1}{2}(^{G}\nabla_{i}u_{j}+^{G}\nabla_{j}u_{i}), \]
we need to write $^{G}\nabla$ in terms of $^{g}\nabla$. First, we write the Christoffel symbol with respect to $G$, $^{G}\Gamma_{ij}^{k}$, in terms of the Christoffel symbols with respect to $g$, $^{g}\Gamma_{ij}^{k}$. \cite{lee}*{Proposition 7.29} yields
\begin{align*}
    ^{G}\Gamma_{ij}^{k} &=^{g}\Gamma_{ij}^{k}+\frac{1}{2}\frac{\partial \log 2(k-U)}{\partial x^{i}} \delta_{j}^{k}+\frac{1}{2}\frac{\partial \log 2(k-U)}{\partial x^{j}} \delta_{i}^{k}-g^{k \ell} \frac{1}{2}\frac{\partial \log 2(k-U)}{\partial x^{\ell}} g_{ij} \\
    &=^{g}\Gamma_{ij}^{k}-\frac{1}{2(k-U)} \left( \frac{\partial U}{\partial x^{i}} \delta_{j}^{k}+\frac{\partial U}{\partial x^{j}} \delta_{i}^{k}-g^{k \ell} \frac{\partial U}{\partial x^{\ell}} g_{ij} \right).
\end{align*}
Then, 
\begin{align*}
    (d_{G}^{s}u)_{ij} &= \frac{1}{2} \left( \frac{\partial u_{j}}{\partial x^{i}}-u_{k} ^{G}\Gamma_{ij}^{k} +\frac{\partial u_{i}}{\partial x^{j}}-u_{k} ^{G}\Gamma_{ij}^{k} \right) \\
    &= \frac{1}{2} (^{g}\nabla_{i}u_{j}+^{g}\nabla_{j}u_{i}) + \frac{u_{k}}{2(k-U)} \left( \frac{\partial U}{\partial x^{i}} \delta_{j}^{k}+\frac{\partial U}{\partial x^{j}} \delta_{i}^{k}-g^{k \ell} \frac{\partial U}{\partial x^{\ell}} g_{ij} \right) \\
    &= \frac{1}{2} (^{g}\nabla_{i}u_{j}+^{g}\nabla_{j}u_{i}) + \frac{1}{2(k-U)} \left( \frac{\partial U}{\partial x^{i}} u_{j}+\frac{\partial U}{\partial x^{j}} u_{i} \right) -\frac{1}{2(k-U)} (dU,u)_{g} g_{ij} \\
    &=2(k-U)d_{g}^{s} \left( \frac{u}{2(k-U)} \right)_{ij}-\left( dU,\frac{u}{2(k-U)} \right)_{g}g_{ij}.
\end{align*}
Hence, 
\[ \dd_{\mathcal{M}} \begin{pmatrix}
    u \\ \varphi
\end{pmatrix}=\begin{pmatrix}
    2(k-U)d_{g}^{s}-(dU,\cdot )_{g}g & 0 \\ -Y_{g} & d
\end{pmatrix} \begin{pmatrix}
    \frac{u}{2(k-U)} \\ \varphi
\end{pmatrix}. \]

Similarly as we did the previous section, we can express the relation between $\dd$ and $\dd_{\mathcal{M}}$ with a map (explicitly given by Proposition \ref{prop:com_d} above), which we define as follows
\begin{align*}
    \phi \colon \mathcal{H}_{0}^{1}(M) & \to \mathcal{H}_{0,\mathcal{M}}^{1}(M), \\
    [u,\varphi,\eta] & \mapsto [2(k-U) u,\varphi ].
\end{align*}
Since $U \in C^{\infty}(M)$ and $M$ is compact, this map is well defined. 

\begin{lemma} \label{lemma:ker_phi}
    $\phi$ is a surjective linear map with kernel
    \[ \ker \phi=\{ [0,0,\eta] \in \mathcal{H}_{0}^{1} \}. \]
\end{lemma}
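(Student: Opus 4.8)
The plan is to check the three assertions --- linearity, surjectivity, and the computation of the kernel --- directly, exactly in the spirit of the proof of Lemma~\ref{lemma:Phi}. The one structural fact underlying all three is that the function $P:=2(k-U)$ is smooth and, because $k>\max_{M}U$ and $M$ is compact, strictly positive and bounded away from zero. Consequently both $P$ and $P^{-1}$ are smooth with bounded derivatives, so multiplication by either of them is a bounded operation that preserves $H^{1}$-regularity and the vanishing of the trace on $\partial M$. This is precisely the observation that makes $\phi$ well defined, and it is the only analytic input the argument requires.

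Linearity is immediate, since $P$ is a fixed function and the assignment $[u,\varphi,\eta]\mapsto[Pu,\varphi]$ is componentwise linear. For surjectivity, I would take an arbitrary $[w,\psi]\in\mathcal{H}_{0,\mathcal{M}}^{1}(M)$ and set $u:=P^{-1}w$ and $\eta:=0$. Then $[u,\psi,0]\in\mathcal{H}_{0}^{1}(M)$ --- here one uses that $P^{-1}$ is bounded with bounded derivatives, so $u$ inherits the $H^{1}$-regularity and boundary vanishing of $w$ --- and
\[ \phi([u,\psi,0])=[Pu,\psi]=[w,\psi]. \]

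For the kernel, suppose $\phi([u,\varphi,\eta])=[Pu,\varphi]=[0,0]$. Then $\varphi=0$, and $Pu=0$; since $P(x)>0$ for every $x\in M$, this forces $u=0$ pointwise. The entry $\eta$ is left entirely unconstrained by $\phi$, so the kernel is exactly $\{[0,0,\eta]\in\mathcal{H}_{0}^{1}(M)\}$, as claimed.

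The argument is elementary throughout; the only place that demands a moment's care --- the ``main obstacle'', such as it is --- is confirming that division by $P$ stays inside $\mathcal{H}_{0}^{1}(M)$. This is where the compactness of $M$ and the standing assumption $k>\max_{M}U$ enter, ensuring that $P$ is bounded below away from zero and hence that $P^{-1}$ is bounded, so that no regularity or boundary condition is lost in passing from $w$ to $u=P^{-1}w$.
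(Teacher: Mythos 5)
Your proof is correct and follows essentially the same route as the paper's: direct verification of linearity, surjectivity via the explicit preimage $[P^{-1}w,\psi,\eta]$, and the kernel computation using the positivity of $P=2(k-U)$. Your write-up is in fact slightly more careful than the paper's, since you spell out why multiplication by $P^{-1}$ preserves $H^{1}$-regularity and the vanishing boundary trace, which the paper leaves implicit.
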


\begin{proof}
    It is clear that $\phi$ is linear, and given $\mathbf{w}=[u,\varphi] \in \mathcal{H}_{0,\mathcal{M}}^{1}(M)$, we have that $\phi([\frac{u}{2(k-U)},\varphi,\eta])=\mathbf{w}$, where $\eta \in H_{0}^{1}(M)$. 
    
    Now take $[u,\varphi,\eta] \in \ker \phi$. Then $u=0$ and $\varphi=0$. Clearly, elements of the form $[0,0,\eta]$ are part of $\ker \phi$. This proves the lemma.
\end{proof}

\begin{rmk}
    We could define $\phi$ in a more general way from $\cal^{2}(M)$ to $\cal_{\mathcal{M}}^{2}(M)$ and obtain an analog to Lemma \ref{lemma:ker_phi}. However, defining it between the Sobolev spaces is more suitable for our proposes, see Proposition \ref{prop:com_d} below.
\end{rmk}

Using $\Phi$ and $\phi$, we can fully understand the relation between $\dd$ and $\dd_{\mathcal{M}}$. Indeed, we have the following result

\begin{prop} \label{prop:com_d}
    The following diagram
    \[\begin{tikzcd}
	{\mathcal{H}_{0}^{1}(M)} && {\mathbf{L}^{2}(M)} \\
	& \\
	{\mathcal{H}_{0,\mathcal{M}}^{1}(M)} && {\mathbf{L}_{\mathcal{M}}^{2}(M)}
	\arrow["{\mathbf{d}}", from=1-1, to=1-3]
	\arrow["\phi"', from=1-1, to=3-1]
	\arrow["\Phi", from=1-3, to=3-3]
	\arrow["{\mathbf{d}_{\mathcal{M}}}"', from=3-1, to=3-3]
    \end{tikzcd}\]
    commutes.
\end{prop}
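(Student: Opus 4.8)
The plan is to verify the operator identity $\Phi \circ \dd = \dd_{\mathcal{M}} \circ \phi$ directly, by evaluating both composites on an arbitrary triple $\mathbf{w}=[u,\varphi,\eta]\in\mathcal{H}_0^1(M)$ and checking that the two resulting pairs in $\LL_{\mathcal{M}}^2(M)$ coincide. Since all four arrows are linear, this pointwise check is the entire content of commutativity.

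First I would follow the top-then-right path. Applying $\dd=\dd_2$ to $\mathbf{w}$ gives the triple $[h,\beta,V]$ with $h=d_g^s u-\tfrac12\eta g$, $\beta=-Y_g u+d\varphi$, and $V=-(dU,u)_g+(k-U)\eta$. Applying $\Phi$ then produces $[\,2(k-U)h+gV,\ \beta\,]$. The key point is the first slot: in $2(k-U)h+gV$ the two $\eta$-contributions are $-(k-U)\eta g$ (coming from $2(k-U)\cdot(-\tfrac12\eta g)$) and $+(k-U)\eta g$ (coming from $g\cdot(k-U)\eta$), and they cancel exactly, leaving $2(k-U)d_g^s u-(dU,u)_g g$. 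Hence this path yields $[\,2(k-U)d_g^s u-(dU,u)_g g,\ -Y_g u+d\varphi\,]$.

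Next I would follow the left-then-bottom path. By definition $\phi(\mathbf{w})=[\,2(k-U)u,\ \varphi\,]$, and I would substitute this into the explicit expression for $\dd_{\mathcal{M}}$ established just above the statement, i.e. the factorization in which $\dd_{\mathcal{M}}[\,w,\varphi\,]$ equals the displayed matrix applied to $[\,w/2(k-U),\varphi\,]$. Taking $w=2(k-U)u$ gives $w/2(k-U)=u$, so the top entry becomes $(2(k-U)d_g^s-(dU,\cdot)_g g)u=2(k-U)d_g^s u-(dU,u)_g g$ and the bottom entry becomes $-Y_g u+d\varphi$. This is exactly the output of the other path, so the diagram commutes.

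There is no serious obstacle here; the only thing to notice is the exact cancellation of the $\eta$-terms in the first slot of $\Phi\circ\dd$. Conceptually this cancellation is forced: the extra column $[-\tfrac12 g,0,k-U]$ of $\dd_2$ (the direction appearing in Remark \ref{rmk:lost}) lies in $\ker\Phi$ by Lemma \ref{lemma:Phi}, while the variable $\eta$ it multiplies spans $\ker\phi$ by Lemma \ref{lemma:ker_phi}. So the two kernels match up under the two composites, which is precisely why the square closes. Once the formula for $\dd_{\mathcal{M}}$ in terms of $d_g^s$ and $Y_g$ is in hand, the verification reduces to matching the two triples computed above.
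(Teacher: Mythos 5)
Your proposal is correct and follows essentially the same route as the paper's own proof: evaluate both composites $\Phi\circ\dd$ and $\dd_{\mathcal{M}}\circ\phi$ on an arbitrary triple $[u,\varphi,\eta]$, use the explicit factorization of $\dd_{\mathcal{M}}$ in terms of $d_g^s$ and $Y_g$ derived just before the statement, and observe that the two $\eta$-contributions in the first slot cancel, leaving $[\,2(k-U)d_g^s u-(dU,u)_g g,\ -Y_g u+d\varphi\,]$ on both sides. Your added remark that the cancellation is forced because the last column of $\dd_2$ lies in $\ker\Phi$ while $\eta$ spans $\ker\phi$ is a nice conceptual gloss not spelled out in the paper, but the computation itself is the same.
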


\begin{proof}
    Let $\mathbf{w}=[u,\varphi,\eta] \in \cal^{2}(M)$. We have to prove that $\Phi \dd \mathbf{w}=\dd_{\mathcal{M}}\phi \mathbf{w}$. We do the explicit computation. First
    \[ \Phi \dd \mathbf{w}=\Phi \begin{pmatrix}
        d_{g}^{s}u -\frac{\eta}{2}g \\ -Y_{g}(u)+ d\varphi \\ -(dU,u)_{g}+\eta(k-U).
    \end{pmatrix}=\begin{pmatrix}
        2(k-U)d_{g}^{s}u-(dU,u)_{g}g \\ -Y_{g}(u)+d\varphi
    \end{pmatrix}. \]
    On the other hand, we have
    \[ \dd_{\mathcal{M}}\phi \mathbf{w}=\dd_{\mathcal{M}} \begin{pmatrix}
        2(k-U)u \\ \varphi
    \end{pmatrix}=\begin{pmatrix}
        2(k-U)d_{g}^{s}u-(dU,u)_{g}g  \\ -Y_{g}(u)+d\varphi
    \end{pmatrix}. \]
    This proves the equality.
\end{proof}

Finally, we prove the following result which gives a relation between the notions of s-injectivity for $I$ and $I_{\mathcal{M}}$.

\begin{prop} \label{prop:s_inj}
    Let $(g,\alpha,U)$ be a simple $\MP$-system of energy $k$, and let $(G,\alpha)$ be its magnetic reduction. Then, $I_{\mathcal{M}}$ is s-injective if and only if $I$ is s-injective.
\end{prop}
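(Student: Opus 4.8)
The plan is to prove the two implications separately, transporting each hypothesis across the reduction using the maps $\Phi$ (Lemma \ref{lemma:Phi}) and $\phi$ (Lemma \ref{lemma:ker_phi}). I will repeatedly use three facts: the pointwise identity $I\f(x,v)=I_{\mathcal M}\Phi(\f)\bigl(x,\tfrac{v}{2(k-U(x))}\bigr)$ of Proposition \ref{prop:rel_mpray_magray}; the commutativity $\Phi\dd=\dd_{\mathcal M}\phi$ of Proposition \ref{prop:com_d}; and the surjectivity of $\Phi$ and $\phi$. A preliminary remark is that the rescaling $(x,v)\mapsto\bigl(x,\tfrac{v}{2(k-U(x))}\bigr)$ is a bijection from $\partial_+S^kM$ onto the influx boundary of the $G$-unit sphere bundle: the energy condition $E(x,v)=k$ forces $|v|_g^2=2(k-U)$, so the rescaled vector has unit $G$-length, and $2(k-U)>0$ makes the correspondence invertible. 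Hence Proposition \ref{prop:rel_mpray_magray} upgrades to the equivalence $I\f=0\iff I_{\mathcal M}\Phi(\f)=0$, which is the engine of both directions.

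I would first dispatch the direction that $I$ s-injective implies $I_{\mathcal M}$ s-injective. Suppose $I_{\mathcal M}\f'=0$ with $\f'\in\LL^2_{\mathcal M}(M)$. By surjectivity of $\Phi$, choose $\f\in\LL^2(M)$ with $\Phi(\f)=\f'$; the equivalence above then gives $I\f=0$, so s-injectivity of $I$ yields $\w=[u,\varphi,\eta]$ with $[u,\varphi,0]\in\mathcal H_0^1(M)$, $\eta\in L^2(M)$, and $\f=\dd\w$. Commutativity now gives $\f'=\Phi\dd\w=\dd_{\mathcal M}\phi\w$, and since $\phi\w=[2(k-U)u,\varphi]\in\mathcal H_{0,\mathcal M}^1(M)$, this exhibits $\f'$ as a magnetic potential.

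For the reverse direction, suppose $I\f=0$; the equivalence gives $I_{\mathcal M}\Phi(\f)=0$, so s-injectivity of $I_{\mathcal M}$ produces $\w'=[u',\varphi']\in\mathcal H_{0,\mathcal M}^1(M)$ with $\Phi(\f)=\dd_{\mathcal M}\w'$. Lifting $\w'$ through the surjection $\phi$ to some $\w\in\mathcal H_0^1(M)$ with $\phi\w=\w'$, commutativity gives $\Phi\dd\w=\dd_{\mathcal M}\w'=\Phi(\f)$, so $\f-\dd\w\in\ker\Phi$. By Lemma \ref{lemma:Phi} we may write $\f-\dd\w=\eta_0[-g,0,2(k-U)]$ for some $\eta_0\in L^2(M)$, and a one-line computation gives $\dd[0,0,2\eta_0]=[-\eta_0 g,0,2(k-U)\eta_0]=\f-\dd\w$. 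Therefore $\f=\dd\bigl(\w+[0,0,2\eta_0]\bigr)$ is a potential, completing the proof.

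The step I expect to be the main obstacle is the last one: one must know that $\ker\Phi$ is contained in the image of $\dd$. This is exactly the feature that distinguishes $\dd=\dd_2$ from $\dd_1$ — the extra last column — as explained around Remark \ref{rmk:lost}; had we taken $\dd_1$ as the notion of potential, the correction term $\f-\dd\w$ could not be absorbed and we would only obtain s-injectivity modulo $\ker\Phi$. The remainder is formal diagram chasing, contingent only on first verifying that the boundary rescaling is a genuine bijection, so that vanishing of $I\f$ is truly equivalent to — and not merely implied by — vanishing of $I_{\mathcal M}\Phi(\f)$.
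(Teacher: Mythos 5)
Your proof is correct and follows essentially the same route as the paper's: both directions are handled by transporting the hypothesis across $\Phi$ and $\phi$ using Proposition \ref{prop:rel_mpray_magray} and the commutativity $\Phi\dd=\dd_{\mathcal{M}}\phi$ of Proposition \ref{prop:com_d}, with the discrepancy $\f-\dd\w\in\ker\Phi$ absorbed into the image of $\dd$ exactly as in the paper. Your two added precisions --- checking that the boundary rescaling $(x,v)\mapsto\bigl(x,\tfrac{v}{2(k-U(x))}\bigr)$ is a bijection (so that $I\f=0$ is genuinely equivalent to $I_{\mathcal{M}}\Phi(\f)=0$), and writing the final potential cleanly as $\f=\dd\bigl(\w+[0,0,2\eta_0]\bigr)$ --- are details the paper leaves implicit, and your version of the last step is in fact stated more accurately than the paper's formula for $\w$.
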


\begin{proof}
    Let $\mathbf{f}=[h,\beta,V] \in \LL^{2}(M)$. Note that $(G,\alpha)$ is again simple by Lemma \ref{lemma:basics_mp}. By Proposition \ref{prop:rel_mpray_magray}, $I_{\mathcal{M}}\Phi(\mathbf{f})=0$. Since $I_{\mathcal{M}}$ is s-injective there exists a pair $[u,\varphi]$ such that $\Phi \mathbf{f}=\dd_{\mathcal{M}}[u,\varphi]$. Take any $\eta_{1} \in H_{0}^{1}(M)$. Then, $\phi \mathbf{w}_{1}=[u,\varphi]$, where $\mathbf{w}_{1}=[\frac{u}{2(k-U)},\varphi,\eta_{1}]$. Thus, $\Phi \mathbf{f}=\dd_{\mathcal{M}} \phi \mathbf{w}_{1}$. By Proposition \ref{prop:com_d}, we have $\dd_{\mathcal{M}} \phi \mathbf{w}_{1}=\Phi \dd \mathbf{w}_{1}$. Hence, $\Phi \mathbf{f}=\Phi \dd \mathbf{w}_{1}$. So, $\mathbf{f}-\dd \mathbf{w}_{1} \in \ker \Phi$. In virtue of Lemma \ref{lemma:Phi}, there exists $\eta_{2} \in L^{2}(M)$ such that 
    \[ \mathbf{f}=\dd \mathbf{w}_{1}+\eta_{2}\begin{pmatrix}
    -g/2 \\ 0 \\ k-U \end{pmatrix}. \]
    Therefore, $\mathbf{f}=\dd\mathbf{w}$, where $\mathbf{w}=\mathbf{w}_{1}+\eta_{2}[-g/2,0,k-U]$, which shows that $I$ is s-injective.

    Reciprocally, take $\mathbf{f}_{\mathcal{M}} \in \LL_{\mathcal{M}}^{2}(M)$ with $I_{\mathcal{M}}\mathbf{f}_{\mathcal{M}}=0$. We have to show that $\mathbf{f}_{\mathcal{M}}$ is a potential in the magnetic sense. By Lemma \ref{lemma:Phi}, there exists $\mathbf{f} \in \LL^{2}(M)$ such that $\Phi \mathbf{f}=\mathbf{f}_{\mathcal{M}}$. Then Proposition \ref{prop:rel_mpray_magray} gives
    \[I\mathbf{f}(x,v)=I_{\mathcal{M}}\Phi(\mathbf{f})\left( x, \frac{v}{2(k-U(x))} \right)=I_{\mathcal{M}}\mathbf{f}_{\mathcal{M}}\left( x, \frac{v}{2(k-U(x))} \right)=0.\]
    Since $I$ is s-injective, this implies that $\mathbf{f}=\dd \mathbf{w}$. In virtue of Proposition \ref{prop:com_d}, we obtain that 
\[ \mathbf{f}_{\mathcal{M}}=\Phi \mathbf{f}=\Phi \dd \mathbf{w}=\dd_{\mathcal{M}}\phi \mathbf{w}, \]
which shows that $I_{\mathcal{M}}$ is s-injective.
\end{proof}

\section{s-injectivity of the ray transform for analytic systems} \label{sec:s_inj_an}

In this section we prove that the $\MP$-ray transform is s-injectivity for analytic $\MP$-system. 

As in \cite{dpsu}, we work on a real analytic manifold $M$, with smooth boundary $\partial M$ that does not need to be analytic. We say that $\mathbf{f}$ is analytic in the set $X$, not necessarily open, if $f$ is analytic in a neighborhood of $X$.

\begin{thm} \label{thm:an_inj}
    If $M$ is a real analytic compact manifold with boundary, and $(g, \alpha, U)$ is a real analytic simple $\MP$-system, then $I$ is s-injective.
\end{thm}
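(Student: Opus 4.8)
The plan is to deduce the result entirely from the magnetic case by passing to the reduced system and invoking Proposition \ref{prop:s_inj}. Given the real analytic simple $\MP$-system $(g,\alpha,U)$ of energy $k$, I would first form its reduced magnetic system $(G,\alpha)=(2(k-U)g,\alpha)$ of energy $\frac{1}{2}$. By Proposition \ref{prop:s_inj}, it suffices to prove that the magnetic ray transform $I_{\mathcal{M}}$ of $(G,\alpha)$ is s-injective, so the whole argument reduces to verifying that the reduction satisfies the hypotheses of the corresponding analytic s-injectivity theorem for magnetic systems proved in \cite{dpsu}.

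The first step is to check that the reduction $(G,\alpha)$ is again real analytic and simple. Simplicity is immediate from Lemma \ref{lemma:basics_mp} part (2). For analyticity, note that $G=2(k-U)g$ is the product of the analytic metric $g$ with the analytic function $2(k-U)$ (here $k$ is a constant and $U$ is analytic by hypothesis); since sums and products of real analytic objects are real analytic, $G$ is a real analytic Riemannian metric on $M$. Moreover, the standing assumption $k>\max_{M}U$ guarantees $2(k-U)>0$ throughout $M$, so $G$ is a genuine metric and the conformal factor is bounded away from zero, causing no loss of analyticity. The $1$-form $\alpha$ is real analytic by assumption and is unchanged by the reduction.

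The second step is to invoke the analytic s-injectivity of the magnetic ray transform from \cite{dpsu}: for a real analytic simple magnetic system on a real analytic manifold with (not necessarily analytic) boundary, $I_{\mathcal{M}}$ is s-injective. Applying this to $(G,\alpha)$ gives that $I_{\mathcal{M}}$ is s-injective. Finally, Proposition \ref{prop:s_inj} transfers this back to the $\MP$-ray transform, yielding that $I$ is s-injective, which is the claim.

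The main obstacle is not the reduction itself---which is a routine verification that analyticity and simplicity are preserved---but ensuring that the cited magnetic result in \cite{dpsu} applies \emph{verbatim} to $(G,\alpha)$. In particular, one should confirm that the analytic s-injectivity statement for magnetic systems is formulated under exactly the hypotheses we can guarantee for the reduction, namely real analytic interior data together with a possibly non-analytic boundary, so that no additional regularity or microlocal condition is silently required. Once this compatibility is confirmed, the theorem follows with no further analytic microlocal work on the $\MP$ side, since all of that machinery is encapsulated in the magnetic statement and transported through Propositions \ref{prop:rel_mpray_magray} and \ref{prop:s_inj}.
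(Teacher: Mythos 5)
Your proposal is correct and is essentially identical to the paper's proof: reduce to the magnetic system $(2(k-U)g,\alpha)$, note that simplicity is preserved by Lemma \ref{lemma:basics_mp} and analyticity is clear since $2(k-U)>0$ is analytic, invoke the analytic s-injectivity result of \cite{dpsu} for the reduced system, and transfer back through Proposition \ref{prop:s_inj}. Your concern about boundary regularity is already handled by the paper's standing convention (real analytic $M$ with smooth, not necessarily analytic, boundary), which matches the hypotheses of the cited magnetic theorem.
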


The proof is just a combination of previous results.

\begin{proof}
     Since $(g,\alpha,U)$ is a simple analytic $\MP$-system, then its reduction $(2(k-U)g,\alpha)$ is again simple by Lemma \ref{lemma:basics_mp}, and is clearly analytic. Then, the result follows from Proposition \ref{prop:s_inj}.
\end{proof}

\section{The linear problem} \label{sec:linear}

The linear problems asks to what extent the $\MP$-ray transform is s-injective. For 1-tensors and functions, we obtain an affirmative answer

\begin{thm} \label{thm:linear_1tensors}
    Let $(M, g, \alpha, U)$ be a simple $\MP$-system, $u$ a square integrable 1-form, and $\varphi$ a square integrable function on $M$. If the $\MP$-ray transform of the function $F(x,v)=\beta_i(x) v^i+V$ vanishes, then $V=0$ and $\beta=d \varphi$ for some function $\varphi \in H_0^1(M)$.
\end{thm}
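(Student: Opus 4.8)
The plan is to reduce the statement to the corresponding (unconditional) result for the magnetic ray transform of the reduced system, and then transport the conclusion back. Set $\mathbf{f}=[0,\beta,V]\in\LL^{2}(M)$, so that $F(x,v)=\beta_{i}(x)v^{i}+V(x)$ is exactly the integrand defining $I\mathbf{f}$, and the hypothesis reads $I\mathbf{f}=0$ on $\partial_{+}S^{k}M$. Since $k>\max_{M}U$, the factor $2(k-U)$ is smooth and bounded away from $0$, so all the pointwise substitutions below preserve $L^{2}$.

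First I would apply Proposition \ref{prop:rel_mpray_magray}. The map $(x,v)\mapsto\bigl(x,\tfrac{v}{2(k-U(x))}\bigr)$ is a diffeomorphism from $\partial_{+}S^{k}M$ onto the incoming unit bundle of the reduced metric $G=2(k-U)g$, so the identity
\[ I\mathbf{f}(x,v)=I_{\mathcal{M}}\Phi(\mathbf{f})\Bigl(x,\tfrac{v}{2(k-U(x))}\Bigr) \]
shows that $I\mathbf{f}=0$ if and only if $I_{\mathcal{M}}\Phi(\mathbf{f})=0$. Computing the reduction gives $\Phi(\mathbf{f})=[\,gV,\beta\,]$, and since $g=\tfrac{1}{2(k-U)}G$ we may rewrite the $2$-tensor part as $gV=\widetilde{V}\,G$ with $\widetilde{V}:=\tfrac{V}{2(k-U)}$. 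The crucial observation is that $gV$ is proportional to the reduced metric $G$: along a unit-speed magnetic geodesic $\gamma$ of $G$ one has $(gV)_{ij}\dot{\gamma}^{i}\dot{\gamma}^{j}=\widetilde{V}(\gamma)\,|\dot{\gamma}|_{G}^{2}=\widetilde{V}(\gamma)$. Hence $I_{\mathcal{M}}\Phi(\mathbf{f})=0$ is precisely the vanishing of the magnetic ray transform of the \emph{function-plus-$1$-form} pair $(\widetilde{V},\beta)$, and I never need the (harder) s-injectivity for genuine $2$-tensors.

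Next I would invoke the magnetic analog of the present theorem, namely the unconditional result of \cite{dpsu} for functions and $1$-forms on a simple magnetic system: the vanishing of the magnetic ray transform of $\widetilde{V}+\beta$ forces $\widetilde{V}=0$ and $\beta=d\varphi$ for some $\varphi\in H_{0}^{1}(M)$. Because $\widetilde{V}=\tfrac{V}{2(k-U)}$ and $2(k-U)>0$ on $M$, this gives $V=0$, together with $\beta=d\varphi$ and $\varphi|_{\partial M}=0$, which is exactly the asserted conclusion.

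Since the computation is short, the only genuine point to get right — and the step I would be most careful about — is the identification in the middle paragraph: recognizing $\Phi([0,\beta,V])=[gV,\beta]$ as a magnetic function-plus-$1$-form datum (because $gV$ is a scalar multiple of $G$) rather than a general $2$-tensor, so that the \emph{unconditional} magnetic result applies and no analyticity or genericity hypotheses are needed. I would also explicitly confirm, as above, that $v\mapsto v/(2(k-U))$ is a bijection between the relevant boundary bundles, so that pointwise vanishing of the transform transfers in both directions.
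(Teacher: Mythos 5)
Your proposal is correct and follows essentially the same route as the paper's own proof: reduce via Proposition \ref{prop:rel_mpray_magray} to the reduced magnetic system $(G,\alpha)$, observe that $\Phi([0,\beta,V])=[gV,\beta]$ acts along unit-speed magnetic geodesics as the function-plus-one-form datum $\frac{V}{2(k-U)}+\beta_i v^i$, and then apply the unconditional result of \cite{dpsu} for functions and $1$-forms, concluding $V=0$ from positivity of $2(k-U)$. Your explicit remark that $v\mapsto v/(2(k-U))$ is a bijection of the boundary bundles is only implicit in the paper, but it is the same argument.
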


\begin{proof}
Consider the magnetic system $(G,\alpha)$, where $G=2(k-U)g$. By Proposition \ref{prop:rel_mpray_magray}, we obtain 
\[ 0=I[0,\beta,V](x,2(k-U(x))v )=I_{\mathcal{M}}[gV,\beta] (x,v). \]
Let $(x,v) \in \partial_{+}S^{G}M$, and let $\gamma$ be the unique magnetic geodesic of energy $\frac{1}{2}$ the system $(G,\alpha)$ with $(\gamma(0),\dot{\gamma}(0))=(x,v)$. Hence, 
\begin{align*}
    0&=(I_{\mathcal{M}}[gV,\beta])(x,v) \\
    &=\int_{0}^{\tau^{G}(x,v)} \left \lbrace V(\gamma(t))g_{ij}(\gamma(t))\dot{\gamma}^{i}(t)\dot{\gamma}^{j}(t)+\beta_{i}(\gamma(t))\dot{\gamma}^{i}(t) \right\rbrace dt \\
    &=\int_{0}^{\tau^{G}(x,v)} \left \lbrace \frac{V(\gamma(t))}{2(k-U(\gamma(t)))}+\beta_{i}(\gamma(t))\dot{\gamma}^{i}(t) \right\rbrace dt, \\
\end{align*}
where $\tau_{G}$ is the exit time for the magnetic system $(G,\alpha)$. Then, the magnetic ray transform of the function $\Tilde{F}(x,v)=\beta_{i}v^{i}+\frac{V(x)}{2(k-U(x))}$ vanishes. In virtue of \cite{dpsu}*{Theorem 5.3}, we obtain that $\frac{V}{2(k-U)}=0$ and $\beta=d\varphi$, for some function $\varphi \in H_{0}^{1}(M)$, which proves the result.
\end{proof}

We solve the linear problem for 2-tensors assuming an extra condition, which arises because we reduce the problem to the one on magnetic systems.

\begin{thm} \label{thm:linear_2tensors}
    If $(M, g, \alpha,U)$ is a simple $\MP$-system with $k(M, 2(k-U)g, \alpha) \leq 4$, then $I_{\MP}$ is s-injective.
\end{thm}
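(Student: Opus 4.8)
The plan is to reduce the statement to the s-injectivity of the magnetic ray transform of the reduced system, and then to quote the corresponding 2-tensor result of \cite{dpsu}. The entire dictionary needed is already in place: by Proposition \ref{prop:s_inj}, the $\MP$-ray transform $I$ is s-injective if and only if the magnetic ray transform $I_{\mathcal{M}}$ of the reduction is s-injective. Consequently, the whole theorem reduces to checking that, under the stated hypothesis, $I_{\mathcal{M}}$ is s-injective. No new transport-equation or microlocal analysis needs to be carried out here; the work has been done in establishing the commutative framework of Sections \ref{sec:ray_transform} and \ref{sec:potential}.

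First I would pass to the reduced magnetic system $(G,\alpha)=(2(k-U)g,\alpha)$ of energy $\tfrac{1}{2}$. Since $(g,\alpha,U)$ is a simple $\MP$-system, Lemma \ref{lemma:basics_mp}(2) guarantees that $(G,\alpha)$ is a simple magnetic system, so the results of \cite{dpsu} are applicable to it. The hypothesis $k(M,2(k-U)g,\alpha)\le 4$ is precisely the curvature-type condition imposed in \cite{dpsu} to obtain s-injectivity of the magnetic ray transform on symmetric $2$-tensors; invoking that theorem with $(G,\alpha)$ in the role of the magnetic system there yields that $I_{\mathcal{M}}$ is s-injective. I would make sure that the notion of magnetic potential used in \cite{dpsu} is exactly the one encoded by $\dd_{\mathcal{M}}$ in this paper, so that the conclusion ``$I_{\mathcal{M}}$ is s-injective'' is the one required by Proposition \ref{prop:s_inj}.

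With that in hand, Proposition \ref{prop:s_inj} transfers s-injectivity from $I_{\mathcal{M}}$ to $I$ directly, which is the claim. I expect the only delicate point to be the bookkeeping at the interface: confirming that the constant $k(M,2(k-U)g,\alpha)$ and the potential notion appearing in \cite{dpsu} coincide with the objects built here (in particular that the extra scalar component $V$ in $\mathbf{f}=[h,\beta,V]$ is correctly absorbed through $\ker\Phi$, which is exactly what Proposition \ref{prop:s_inj} already accounts for). Granting this identification, the proof is simply the combination of Lemma \ref{lemma:basics_mp}, the quoted $2$-tensor s-injectivity theorem of \cite{dpsu}, and Proposition \ref{prop:s_inj}.
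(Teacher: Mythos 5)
Your proposal is correct and follows essentially the same route as the paper: reduce to the magnetic system $(G,\alpha)=(2(k-U)g,\alpha)$, note it is simple by Lemma \ref{lemma:basics_mp}, invoke the 2-tensor s-injectivity theorem of \cite{dpsu} (Theorem 5.4 there, whose hypothesis is exactly $k(M,2(k-U)g,\alpha)\le 4$), and transfer the conclusion back via Proposition \ref{prop:s_inj}. The bookkeeping concerns you raise (matching $\dd_{\mathcal{M}}$ with the potential notion of \cite{dpsu}, and absorbing $V$ through $\ker\Phi$) are precisely what Proposition \ref{prop:s_inj} was set up to handle, so nothing further is needed.
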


Here $k(M,2(k-U)g,\alpha)$ is defined as follows. First, let us write $G=2(k-U)g$. Set
\begin{equation} \label{eq:k_mu}
    k_\mu(x, v)=\sup _w \{2 K_{G}(x, \sigma_{v, w})+(Y_{G}(w), v)_{G}^2+(n+3)|Y_{G}(w)|_{G}^2-2((\nabla^{G}_w Y_{G})(v), w)_{G} \},
\end{equation}
where the supremum is taken over all unit vectors $w \in T_x M$ orthogonal to $v$, and $K_{G}(x, \sigma_{v,w})$ is the sectional curvature of the 2-plane $\sigma_{v,w}$ spanned by $v$ and $w$. Define
\[ k_\mu^{+}(x, v)=\max \{0, k(x,v)\} \]
and
\[ k(M, 2(k-U)g, \alpha)=\sup _\gamma T_\gamma \int_0^{T_\gamma} k_\mu^{+}(\gamma(t), \dot{\gamma}(t)) d t,\]
where the supremum is taken over all unit speed magnetic geodesics $\gamma \colon [0, T_\gamma ] \to M$ running between boundary points. Obviously, definition of $k_{\mu}$ can be done more explicit in terms of $g$, $Y_{g}$ and $U$, but we omit that in this work. However, we would like to mention that by expanding $K_{G}$ in terms of $K_{g}$ and derivatives of $U$, we see that the condition in Theorem \ref{thm:linear_2tensors} holds when $(M,g)$ is negatively curved, the $C^{1}$ norm of $Y$ is small enough, and the $C^{2}$ norm of $U$ is small enough. This condition is always true in a small enough subset of $M$, with $(M,g)$ negatively curved. 

Finally, we would like to mention that quantities as in \eqref{eq:k_mu} are ``classical'', in the sense that they appear in the problem of inverting the ray transform for tensors, see \cite{sha}. In our case, the reason is similar, and this function appear as a consequence of our methods, since we use the results in \cite{dpsu}, which follows the strategy presented in \cite{sha}.

\begin{proof}[Proof of Theorem \ref{thm:linear_2tensors}]
The proof is similar to the one of Theorem \ref{thm:an_inj}. Consider the magnetic system $(G,\alpha)$, where $G=2(k-U)g$. In light of \cite{dpsu}*{Theorem 5.4} we have that $I_{\mathcal{M}}$ is s-injective. Now the result follows from Proposition \ref{prop:s_inj}.
\end{proof}

\section{Generic results} \label{sec:generic}

In this section we show that for large $m$, the set of $\MP$-systems for which the $\MP$-ray transform is s-injective is open and dense in the $C^{m}$ topology. We also prove that two $\MP$-system that are close to a system in this set, and that have the same boundary action function at energy level $k$, the systems must be $k$-gauge equivalent.

\subsection{Generic s-injectivity} \label{subsec:gen_s_inj}

\begin{defin}
    For a fixed manifold $M$, we define $\mathcal{G}^{m}$ to be the set of simple $C^m$ systems $(g, \alpha,U)$ with s-injective $\MP$-ray transform $I$.
\end{defin}

There is an analogue of $\mathcal{G}^{m}$ for magnetic systems (see \cite{dpsu}*{Definition 4.10}), which we denote by $\mathcal{G}_{\mathcal{M}}^{m}$.

\begin{rmk} \label{rmk:s_inj}
    Note that Proposition \ref{prop:s_inj} can be seen in the following way: $(g,\alpha,U) \in \mathcal{G}^{m}$ if and only if $(2(k-U)g,\alpha) \in \mathcal{G}_{\mathcal{M}}^{m}$.
\end{rmk}

The objective is to obtain the following generalization of \cite{dpsu}*{Theorem 4.11}.

\begin{thm} \label{thm:gen_sinj}
    There exists $m_0>0$, such that for $m \geq m_0$, the set $\mathcal{G}^m$ is open and dense in the set of all simple $C^m$ $\MP$-systems $(g, \alpha,U)$ and contains all real analytic simple $\MP$-systems.
\end{thm}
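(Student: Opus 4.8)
The plan is to transfer the corresponding magnetic statement, \cite{dpsu}*{Theorem 4.11}, through the reduction map. Consider
\[ R(g,\alpha,U)=(2(k-U)g,\alpha), \]
which sends a simple $C^m$ $\MP$-system to its reduced magnetic system; this is again simple by Lemma \ref{lemma:basics_mp}. Remark \ref{rmk:s_inj} says precisely that $(g,\alpha,U)\in\mathcal{G}^m$ if and only if $R(g,\alpha,U)\in\mathcal{G}_{\mathcal{M}}^m$, i.e.\ $\mathcal{G}^m=R^{-1}(\mathcal{G}_{\mathcal{M}}^m)$ inside the space of simple $C^m$ $\MP$-systems. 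I would take $m_0$ to be the constant furnished by \cite{dpsu}*{Theorem 4.11}. The analytic assertion is then immediate: a real analytic simple $\MP$-system is covered directly by Theorem \ref{thm:an_inj} (equivalently, its reduction is a real analytic simple magnetic system, hence lies in $\mathcal{G}_{\mathcal{M}}^m$, and Remark \ref{rmk:s_inj} returns it to $\mathcal{G}^m$).

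For openness, I would observe that $R$ is continuous for the $C^m$ topologies: its second component is the identity, while the first, $(g,U)\mapsto 2(k-U)g$, is a continuous (indeed algebraic) operation on $C^m$ tensors and functions, so a $C^m$-small perturbation of $(g,\alpha,U)$ yields a $C^m$-small perturbation of its reduction. Since $\mathcal{G}_{\mathcal{M}}^m$ is open among simple magnetic systems and the simple $\MP$-systems form an open set (small perturbations of simple systems remain simple, cf.\ \eqref{eq:small_pert}), the preimage $\mathcal{G}^m=R^{-1}(\mathcal{G}_{\mathcal{M}}^m)$ is open.

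The density is the only step requiring a genuine construction, because $R$ is not injective: its fibers are parametrised by the free choice of the potential. Given a simple $\MP$-system $(g,\alpha,U)$ and a target accuracy $\eps>0$, its reduction $(G,\alpha)=(2(k-U)g,\alpha)$ is a simple magnetic system, so by density of $\mathcal{G}_{\mathcal{M}}^m$ there is a $C^m$-close $(G',\alpha')\in\mathcal{G}_{\mathcal{M}}^m$. I would lift this back by keeping the potential fixed, setting
\[ U'=U,\qquad g'=\frac{G'}{2(k-U)}. \]
Since $U$ is unchanged we retain $k>\max_M U'$, and $R(g',\alpha',U')=(G',\alpha')$, so $(g',\alpha',U')\in\mathcal{G}^m$ by Remark \ref{rmk:s_inj} and is simple by Lemma \ref{lemma:basics_mp}. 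Because dividing by the fixed strictly positive $C^m$ function $2(k-U)$ is continuous, $g'$ is $C^m$-close to $g=G/(2(k-U))$ while $\alpha'$ is close to $\alpha$, producing an element of $\mathcal{G}^m$ within $\eps$ of $(g,\alpha,U)$.

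The only nonformal point—and hence the main obstacle—is this density step: one must exploit the surjectivity of $R$ together with the explicit section $U'=U$, $g'=G'/(2(k-U))$ to convert approximations in the magnetic world into approximations of $\MP$-systems without disturbing the potential or leaving the simple (and admissible, $k>\max U$) class. Everything else is a formal consequence of the continuity of $R$ and the identity $\mathcal{G}^m=R^{-1}(\mathcal{G}_{\mathcal{M}}^m)$.
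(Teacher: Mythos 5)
Your proposal is correct, and it diverges from the paper exactly at the density step. The openness argument is essentially the paper's: the paper proves continuity of the reduction map by the explicit estimate $\|G-G'\|_{C^m}\leq \eps\,(2k+\|g\|_{C^m}+\|U\|_{C^m}+\eps)$ and then combines openness of $\mathcal{G}_{\mathcal{M}}^m$ (\cite{dpsu}*{Corollary 4.4}) with Proposition \ref{prop:s_inj}, which is precisely your identity $\mathcal{G}^m=R^{-1}(\mathcal{G}_{\mathcal{M}}^m)$; the analytic containment is Theorem \ref{thm:an_inj} in both treatments. For density, however, the paper does not lift approximations from the magnetic side at all: it simply notes that $\mathcal{G}^m$ contains every real analytic simple $\MP$-system and concludes density from that, implicitly using the standard fact that real analytic data are $C^m$-dense and that simplicity is an open condition. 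You instead invoke the density half of \cite{dpsu}*{Theorem 4.11} and build an explicit section of the reduction over the fiber of the fixed potential, $U'=U$, $g'=G'/\bigl(2(k-U)\bigr)$, verifying that the lift stays simple (Lemma \ref{lemma:basics_mp}), stays admissible since $k>\max_M U'$, and is $C^m$-close because division by the fixed strictly positive $C^m$ function $2(k-U)$ is continuous. Both routes are sound, and your fiber-wise checks are exactly the ones needed. What your route buys: analytic approximation never has to be performed on the $\MP$ side (it is absorbed into the cited magnetic theorem), and the fibration structure of the reduction is made explicit, so density becomes a purely formal transfer along a section. What the paper's route buys: it is shorter, needs no lift construction, and exhibits the dense set concretely as the analytic systems themselves, whereas your nearby system $(G'/(2(k-U)),\alpha',U)$ generally lies in $\mathcal{G}^m$ without being analytic (since $U$ is only $C^m$), which is sufficient but less explicit.
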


First we prove that $\mathcal{G}^{m}$ is open.

\begin{lemma} \label{lemma:gen_sinj}
    The set $\mathcal{G}^{m}$ is open in the $C^m$ topology in the set of all simple $C^{m}$ $\MP$-systems, provided $m \gg 1$.
\end{lemma}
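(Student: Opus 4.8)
The plan is to deduce openness of $\mathcal{G}^{m}$ from the corresponding openness result for magnetic systems, which is part of \cite{dpsu}*{Theorem 4.11}, by transporting it through the reduction map $(g,\alpha,U) \mapsto (2(k-U)g,\alpha)$. By Remark \ref{rmk:s_inj}, we have the equivalence $(g,\alpha,U) \in \mathcal{G}^{m}$ if and only if $(2(k-U)g,\alpha) \in \mathcal{G}_{\mathcal{M}}^{m}$. So if I denote by $R$ the reduction map sending an $\MP$-system to its reduced magnetic system, then $\mathcal{G}^{m}=R^{-1}(\mathcal{G}_{\mathcal{M}}^{m})$. Since $\mathcal{G}_{\mathcal{M}}^{m}$ is open in the space of simple $C^{m}$ magnetic systems (for $m \gg 1$) by \cite{dpsu}, it suffices to show that $R$ is continuous with respect to the $C^{m}$ topologies; the preimage of an open set under a continuous map is open.

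The key steps, in order, are as follows. First I would fix $m \gg 1$ large enough that the magnetic genericity result applies and that simplicity is an open condition (noting that simplicity of the $\MP$-system is equivalent to simplicity of its reduction by Lemma \ref{lemma:basics_mp}(2), so the domain of $R$ is itself open). Second, I would verify that $R$ is well defined into the space of simple magnetic systems and is continuous in the $C^{m}$ topology. This is the heart of the matter: one must check that if $(g',\alpha',U')$ is $C^{m}$-close to $(g,\alpha,U)$, then the reduced metric $2(k-U')g'$ is $C^{m}$-close to $2(k-U)g$ and the magnetic potential $\alpha'$ is $C^{m}$-close to $\alpha$. The latter is immediate, and the former follows because multiplication by the smooth, strictly positive factor $2(k-U)$ (and small perturbations thereof) is a continuous operation in $C^{m}$: the product of two $C^{m}$-close functions with a uniformly $C^{m}$-bounded factor stays $C^{m}$-close, using that $k > \max_{M}U$ guarantees the factor is bounded away from zero on the compact manifold $M$. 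Third, I would conclude by openness of preimages: given $(g,\alpha,U) \in \mathcal{G}^{m}$, its reduction lies in the open set $\mathcal{G}_{\mathcal{M}}^{m}$, so a whole $C^{m}$-neighborhood of the reduction is in $\mathcal{G}_{\mathcal{M}}^{m}$; by continuity of $R$ the preimage of that neighborhood is a $C^{m}$-neighborhood of $(g,\alpha,U)$ contained in $\mathcal{G}^{m}$.

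The main obstacle I anticipate is purely technical rather than conceptual: making the continuity of $R$ precise. The reduction involves the product $2(k-U)g$, and while multiplication is continuous in $C^{m}$, one should be careful that the topology on the space of $\MP$-systems is the product $C^{m}$ topology on the triple $(g,\alpha,U)$, and that the estimate $\norm{2(k-U')g'-2(k-U)g}_{C^{m}} \leq C(\norm{U'-U}_{C^{m}}+\norm{g'-g}_{C^{m}})$ holds with a constant $C$ depending only on the $C^{m}$ norms of $g$ and $U$, which remain bounded on a small neighborhood. Because all data live on a fixed compact $M$ and $k-U$ is bounded below by a positive constant, these are standard Banach-algebra-type estimates for $C^{m}(M)$, so the argument goes through without difficulty. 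I would therefore present the proof concisely: state that $R$ is continuous via these product estimates, invoke Remark \ref{rmk:s_inj} to identify $\mathcal{G}^{m}=R^{-1}(\mathcal{G}_{\mathcal{M}}^{m})$, and conclude openness from openness of $\mathcal{G}_{\mathcal{M}}^{m}$ in \cite{dpsu}.
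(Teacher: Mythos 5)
Your proposal is correct and is essentially the paper's own argument: the paper likewise fixes a nearby simple system, proves the Banach-algebra estimate $\|G-G'\|_{C^m}\leq \eps\,(2k+\|g\|_{C^m}+\|U\|_{C^m}+\eps)$ for the reduced metrics (i.e.\ the $C^m$-continuity of your map $R$), and then pulls back the openness of $\mathcal{G}_{\mathcal{M}}^{m}$ from \cite{dpsu} through Proposition \ref{prop:s_inj}. Your phrasing via $\mathcal{G}^{m}=R^{-1}(\mathcal{G}_{\mathcal{M}}^{m})$ and preimages of open sets is just a cleaner packaging of the same steps.
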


\begin{proof}
    Take $(g,\alpha,U)\in \mathcal{G}^{m}$, and let $\epsilon>0$ to be chosen. Let $(g',\alpha',U')$ be a simple $\MP$-system which is $\eps$-close to $(g,\alpha,U)$, that is,
    \[ \|g-g'\|_{C^m(M)}+\|\alpha-\alpha'\|_{C^m(M)}+\|U-U'\|_{C^{m}(M)}\leq \eps \]
    Consider its magnetic reduced systems $(G,\alpha)$, $(G',\alpha)$ with $G=2(k-U)g$, $G'=2(k-U)$. Then, 
    \begin{align*}
        \|G-G'\|_{C^{m}} &=\| 2(k-U)g-2(k-U')g' \|_{C^{m}} \\
        & \leq 2k \|g-g'\|_{C^{m}}+\|Ug-U'g'\|_{C^{m}} \\
        & \leq 2k \|g-g'\|_{C^{m}}+\|Ug-U'g\|_{C^{m}}+\|U'g-U'g'\|_{C^{m}} \\
        & \leq 2k \|g-g'\|_{C^{m}}+\|g\|_{C^{m}}\|U-U'\|_{C^{m}}+\|U'\|_{C^{m}}\|g-g'\|_{C^{m}} \\
        & \leq 2k\eps +\|g\|_{C^{m}}\eps+\|U'\|_{C^{m}} \eps \\
        & \leq \eps (2k+\|g\|_{C^{m}}+\|U\|_{C^{m}}+\eps).
    \end{align*}
    Note that by Proposition \ref{prop:s_inj} (see also Remark \ref{rmk:s_inj}), we have that $(G,\alpha) \in \mathcal{G}_{\mathcal{M}}^{m}$. Since this last set is open in the $C^{m}$ topology (\cite{dpsu}*{Corollary 4.4}), we can choose $\eps$ small enough and $m$ big enough so that $(G',\alpha') \in \mathcal{G}_{\mathcal{M}}^{m}$. Therefore, again by Proposition \ref{prop:s_inj}, we conclude that $(g',\alpha',U') \in \mathcal{G}^{m}$. This shows that a ball of radius $\eps$ centered in $(g,\alpha,U) \in \mathcal{G}^{m}$ is a subset of $\mathcal{G}^{m}$, finishing the proof.
\end{proof}

\begin{proof}[Proof of Theorem \ref{thm:gen_sinj}]
    By Lemma \ref{lemma:gen_sinj}, we obtain that set $\mathcal{G}^{m}$ is open. By Theorem \ref{thm:an_inj}, we obtain that $\mathcal{G}^{m}$ contains all real analytic simple $\MP$-systems, and therefore, is dense.
\end{proof}

\subsection{Generic local boundary rigidity} \label{subsec:gen_rig}

We will prove that near each $(g_0, \alpha_0 , U)$ in the generic set $\mathcal{G}^{m}$ , the boundary action function determines the $\MP$-system.

\begin{thm} 
    Let $m_{0}$ be as in Theorem \ref{thm:gen_sinj}. There exists $m \geq m_0$ such that for every $(g_0, \alpha_0, U_{0}) \in \mathcal{G}^m$, there is $\eps>0$ such that for any two $\MP$-systems $(g, \alpha, U)$, $(g, \alpha,U)$ with
    \begin{align*}
        \|g-g_0\|_{C^m(M)}+\|\alpha-\alpha_{0}\|_{C^m(M)}+\|U-U_{0}\|_{C^{m}(M)} &\leq \eps, \\
        \|g'-g_0\|_{C^m(M)}+\|\alpha'-\alpha_{0}\|_{C^m(M)}+\|U'-U_{0}\|_{C^{m}(M)} &\leq \eps,
    \end{align*}
    we have the following:
    \[\mathbb{A}_{g, \alpha,U}=\mathbb{A}_{g', \alpha',U'} \quad \text { on } \partial M \times \partial M, \]
    (where the boundary action functions are of energy $k$), implies that $(g, \alpha,U)$ and $(g', \alpha',U')$ are $k$-gauge equivalent.
\end{thm}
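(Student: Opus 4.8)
The plan is to reduce the $\MP$ boundary rigidity statement to the corresponding generic local boundary rigidity theorem for simple magnetic systems proved in \cite{dpsu}, using the magnetic reduction as the bridge. The whole strategy mirrors the proof of Lemma \ref{lemma:gen_sinj}: we transport closeness, the boundary-action hypothesis, and the gauge conclusion back and forth between an $\MP$-system and its reduced magnetic system $(2(k-U)g,\alpha)$.

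First I would fix $(g_0,\alpha_0,U_0)\in\mathcal{G}^m$ and pass to its reduction $(G_0,\alpha_0)$ with $G_0=2(k-U_0)g_0$. By Remark \ref{rmk:s_inj} (equivalently Proposition \ref{prop:s_inj}), $(G_0,\alpha_0)$ lies in the magnetic generic set $\mathcal{G}^m_{\mathcal{M}}$, so the generic local boundary rigidity theorem for magnetic systems (\cite{dpsu}*{Theorem~4.12} or its analogue) applies near $(G_0,\alpha_0)$, furnishing a magnetic radius $\eps_{\mathcal{M}}>0$. The computation already carried out in Lemma \ref{lemma:gen_sinj} shows that if $(g,\alpha,U)$ is $\eps$-close to $(g_0,\alpha_0,U_0)$ in $C^m$, then its reduction $(2(k-U)g,\alpha)$ is $C\eps$-close to $(G_0,\alpha_0)$ for a constant $C$ depending only on $k$ and the $C^m$ norms of $g_0,U_0$; so I would choose $\eps$ small enough (and $m\ge m_0$ large enough) that both $(2(k-U)g,\alpha)$ and $(2(k-U')g',\alpha')$ land inside the magnetic $\eps_{\mathcal{M}}$-ball around $(G_0,\alpha_0)$.

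Next I would translate the boundary-action hypothesis. By Lemma \ref{lemma:basics_mp}(3), the boundary action function of an $\MP$-system at energy $k$ equals the boundary action function of its reduced magnetic system at energy $\tfrac12$ on $\partial M\times\partial M$. Hence the assumption $\mathbb{A}_{g,\alpha,U}=\mathbb{A}_{g',\alpha',U'}$ on $\partial M\times\partial M$ gives
\[
\mathbb{A}_{2(k-U)g,\alpha}=\mathbb{A}_{2(k-U')g',\alpha'}\quad\text{on }\partial M\times\partial M,
\]
the equality of boundary action functions for the two reduced magnetic systems at energy $\tfrac12$. Now the magnetic generic local rigidity theorem applies: the two reductions are magnetic gauge equivalent at energy $\tfrac12$.

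Finally I would pull the conclusion back through Lemma \ref{lemma:mp_red}: since the reduced magnetic systems of $(g,\alpha,U)$ and $(g',\alpha',U')$ are magnetic gauge equivalent (at energy $\tfrac12$), that lemma's converse direction yields that $(g,\alpha,U)$ and $(g',\alpha',U')$ are themselves $k$-gauge equivalent, which is the desired conclusion. The main obstacle I anticipate is the bookkeeping in the closeness step: I must verify that the reduced metrics $2(k-U)g$ and $2(k-U')g'$ are not merely close to each other but close to the \emph{fixed} reference $G_0=2(k-U_0)g_0$ in $C^m$, with a uniform constant, so that a single magnetic $\eps_{\mathcal{M}}$ obtained from the reference point covers both perturbed reductions; this is exactly the estimate performed in Lemma \ref{lemma:gen_sinj}, applied once to each system, and it forces the choice of $\eps$ to depend on $\eps_{\mathcal{M}}$ and on the norms of the background data. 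A secondary point worth checking is that the magnetic reference $(G_0,\alpha_0)$ genuinely lands in the open magnetic generic set with room to spare, which is guaranteed since $\mathcal{G}^m_{\mathcal{M}}$ is open (\cite{dpsu}*{Corollary~4.4}).
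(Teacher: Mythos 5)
Your proposal is correct and follows essentially the same route as the paper's own proof: reduce both systems to their magnetic reductions, verify $C^m$-closeness of the reductions to $(G_0,\alpha_0)$ as in Lemma \ref{lemma:gen_sinj}, transfer the boundary-action hypothesis via Lemma \ref{lemma:basics_mp}(3), apply the generic local magnetic rigidity theorem of \cite{dpsu} (the paper cites Theorem~6.5 there, not 4.12, but that is the result you mean), and conclude with Lemma \ref{lemma:mp_red}. No gaps to flag.
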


\begin{proof}
    Consider the reduction of the $\MP$-systems, that is, we consider $(G_{0},\alpha_{0})$, $(G,\alpha)$ and $(G,\alpha')$, where $G_{0}=2(k-U_{0})g_{0}$, $G=2(k-U)g$ and $G'=2(k-U')g'$. We would like to apply \cite{dpsu}*{Theorem 6.5} to these systems in order to obtain our result. First we show that $G_{0}$ and $G$ are close in the $C^{m}$ topology. Indeed, 
\begin{align*}
    \|G_{0}-G \|_{C^{m}(M)}  \leq & 2k\| g_{0}-g\|_{C^{m}(M)}+2 \| U_{0}g_{0}-Ug \|_{C^{m}(M)} \\
     \leq & 2k\| g_{0}-g\|_{C^{m}(M)}+2c(m)\| U\|_{C^{m}(M)} \| g_{0}-g\|_{C^{m}(M)} \\
    &+2c(m) \|g\|_{C^{m}(M)} \| U_{0}-U \|_{C^{m}(M)} \\
    \leq & \max \{2k,2c(m) \| U\|_{C^{m}(M)}, 2c(m) \|g\|_{C^{m}(M)} \}2\eps.
\end{align*}
A similar computation shows that $G'$ is close to $G$ in the $C^{m}$ topology. This imply that $(G,\alpha)$ and $(G',\alpha')$ are close to $(G_{0},\alpha_{0})$. The hypothesis about the boundary action functions of the $\MP$-systems imply, in virtue of Lemma \ref{lemma:basics_mp}, that the boundary action functions $\A$ and $\A'$, of the corresponding magnetic systems $(G,\alpha)$ and $(G',\alpha')$, coincide. Finally, since $(g_{0},\alpha_{0},U_{0}) \in \mathcal{G}^{m}$, by Proposition \ref{prop:s_inj} (see also Remark \ref{rmk:s_inj}), we have $(G_{0},\alpha_{0}) \in \mathcal{G}_{\mathcal{M}}^{m}$. This allow us to invoke \cite{dpsu}*{Theorem 6.5}, which gives the existence of a $C^{m+1}$ diffeomorphism $f \colon M \to M$ fixing the boundary, and a function $\varphi \in C^{m+1}(M)$ vanishing on the boundary, so that, $G'=f^{*}G$ and $\alpha'=f^{*}\alpha+d\varphi$. The conclusion now follows from Lemma \ref{lemma:mp_red}.
\end{proof}

\begin{rmk}
    We can characterize the generic set in another way. Let
    \[ \tilde{\mathcal{G}}^{m}=\left\lbrace \left( \frac{1}{2(k-U)}G,\alpha,U \right): (G,\alpha) \in \mathcal{G}_{\mathcal{M}}^{m}, \, U \in C^{\infty}(M) \right\rbrace. \]
    Then $\tilde{\mathcal{G}}^{m}=\mathcal{G}^{m}$. Indeed, take $(g,\alpha,U) \in \tilde{\mathcal{G}}^{m}$. Then $2(k-U)g=G$ and $(G,\alpha) \in \mathcal{G}_{\mathcal{M}}^{\alpha}$. By Proposition \ref{prop:s_inj} (see also Remark \ref{rmk:s_inj}) we conclude that $(g,\alpha,U) \in \mathcal{G}^{m}$. On the other hand, given $(g,\alpha,U) \in \mathcal{G}^{m}$, again by Proposition \ref{prop:s_inj} we obtain $(G,\alpha) \in \mathcal{G}_{\mathcal{M}}^{m}$, where $G=2(k-U)g$. Then, by definition, $(g,\alpha,U)=(\frac{1}{2(k-U)}G,\alpha,U) \in \tilde{\mathcal{G}}^{m}$.
\end{rmk}

It would be interesting to study the $\MP$-ray transform as in \cite{su05}, \cite{dpsu}. In particular, to obtain a potential-solenoidal decomposition. After this, one could try to obtain a stability estimate for the solenoidal parts, involving the normal operator, generalizing the known result in the magnetic case. The study of the microlocal properties of the normal operator it would be interesting as well.

\appendix

\section{Santal\'o's formula} \label{sec:app}

Here we prove a Santal\'o's formula for $\MP$-systems. The usual proof of this result in the Riemannian case is based in the fact that the Liouville measure in $SM$ is invariant over the geodesic flow, and the application Stokes' theorem (\cite{psu}). One can also obtain it by using a more geometric/dynamical approach (\cite{sha_ray}). We take another path here, similar to the one in \cite{lz}. We take advantage that Santal\'o's formula is already known in the magnetic case \cite{dpsu}, and we use the relation between an $\MP$-systems and its magnetic reduction to obtain the result.

Here $d\Sigma_{k}^{2n-1}$ is the Liouville measure in $S^{k}M$, while $d\mu_{k}$ is the measure in $\partial_{+}S^{k}M$ given by
\[ d\mu_{k}(x,v)=(v,\nu_{k}(x))_{g}d\Sigma_{k}^{2n-2}(x,v), \]
where $\nu_{k}(x)$ is the inward normal vector to $\partial M$ at $x$ with $|\nu_{k}(x)|_{g}=2(k-U(x))$, and $d\Sigma_{k}^{2n-2}$ is the Liouville measure on $\partial_{+}S^{k}M$. We denote the unit sphere bundle of $M$ with respect to the metric $G$ by $S^{G}M$. We also write
\[ \partial_{+}S^{G}M=\{(x, w) \in S_{x}^{G}M: x \in \partial M, (w, \nu_{G}(x))_{G} \geq 0\}, \]
where $\nu_{G}(x)$ is the inward normal vector to $\partial M$ at $x$ with $|\nu_{G}(x)|_{G}=1$. Let $d\sigma_{G}$, $d\sigma_{k}$ denote the measures on $S_{x}^{G}M$ and $S_{x}^{k}M$, respectively. Let $d\mu_{G}$ denote the measure in $\partial_{+}S^{G}M$ given by
\[ d\mu_{G}(x,w)=(w,\nu_{G}(x))_{G}d\Sigma_{G}^{2n-2}(x,w), \]
where $d\Sigma_{G}^{2n-2}$ is the Liouville measure on $\partial_{+}S^{G}M$. Finally, let $d\Sigma_{G}^{2n-1}$ denote the Liouville measure on $S^{G}M$. As in Section \ref{sec:ray_transform}, we write $P(x)=2(k-U(x))$.

\begin{prop}[Santal\'o's formula] \label{prop:santalo}
Let $(M,g,\alpha,U)$ be a simple $\MP$-system of energy $k$. Then for any continuous function $f \colon S^{k}M \to \R$ we have
\[ \int_{S^{k}M} f d\Sigma_{k}^{2n-1}=\int_{\partial_{+}S^{k}M} \left( \int_{0}^{\tau(x,v)} P(\sigma_{x,v}(t))^{\frac{1}{2}} f(\sigma_{x,v}(t),\dot{\sigma}_{x,v}(t)) dt\right) P(x)^{-1}d\mu_{k}. \]
\end{prop}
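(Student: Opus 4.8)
The plan is to transport the statement to the magnetic reduction $(G,\alpha)$ with $G=P g$, $P=2(k-U)$, where Santal\'o's formula is already available from \cite{dpsu}, and then undo the reduction. The geometric input is the fibrewise scaling
\[ \Psi\colon S^{k}M \to S^{G}M, \qquad \Psi(x,v)=\Big(x,\tfrac{v}{P(x)}\Big), \]
which is a diffeomorphism: if $(x,v)\in S^{k}M$ then $|v|_{g}^{2}=P(x)$, so $|v/P|_{G}^{2}=P|v/P|_{g}^{2}=1$ and $v/P\in S^{G}_{x}M$; moreover $\Psi$ preserves the sign of $\langle v,\nu\rangle$ and so restricts to $\partial_{+}S^{k}M\to\partial_{+}S^{G}M$. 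This $\Psi$ is the infinitesimal shadow of Jacobi--Maupertuis: by Lemma \ref{lemma:basics_mp}(1) and the reparametrisation $s(t)=\int_{0}^{t}P(\sigma)\,dt'$, an $\MP$-geodesic $\sigma$ of energy $k$ becomes, after $\Psi$, a unit-speed magnetic geodesic $\gamma(s)=\sigma(t(s))$ with $\dot\gamma=\dot\sigma/P$, $ds=P\,dt$, and exit time $\tau^{G}(x,v/P)=s(\tau(x,v))$. These are exactly the relations already used in Proposition \ref{prop:rel_mpray_magray}.

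The second ingredient is the behaviour of the two Liouville measures under $\Psi$. Since $G=P g$, the Riemannian volume scales as $d\vol_{G}=P^{n/2}\,d\vol_{g}$, while the fibre-sphere measure scales as $P^{-(n-1)/2}$, because $\Psi$ rescales each fibre by $1/P$ and simultaneously changes the metric used to measure it from $g$ to $G$. I would combine these into the two transformation identities
\[ \Psi^{*}\,d\Sigma_{G}^{2n-1}=P^{1/2}\,d\Sigma_{k}^{2n-1}, \qquad \Psi^{*}\,d\mu_{G}=P^{-1}\,d\mu_{k}; \]
the first is the product of the volume and fibre factors on the full bundle, and the second follows from the analogous computation on the boundary bundle $\partial_{+}S^{k}M$ together with the scaling of the normal pairing $\langle v,\nu\rangle$ entering $d\mu=\langle v,\nu\rangle\,d\Sigma^{2n-2}$.

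With these in hand the proof is a substitution. I would apply the magnetic Santal\'o formula of \cite{dpsu} to the transported integrand $F(x,w):=P(x)^{-1/2}f(x,P(x)w)$ on $S^{G}M$. Pulling the left-hand side back by $\Psi$, the factor $P^{-1/2}$ from $F$ cancels the $P^{1/2}$ Jacobian and returns $\int_{S^{k}M}f\,d\Sigma_{k}^{2n-1}$. On the right-hand side, changing the inner variable from $s$ to $t$ via $ds=P\,dt$ and $\dot\gamma=\dot\sigma/P$ turns $\int_{0}^{\tau^{G}}F(\gamma,\dot\gamma)\,ds$ into $\int_{0}^{\tau}P^{1/2}f(\sigma,\dot\sigma)\,dt$, while the boundary identity $\Psi^{*}d\mu_{G}=P^{-1}d\mu_{k}$ supplies the outer weight $P(x)^{-1}\,d\mu_{k}$. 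Assembling the two sides yields exactly the asserted formula.

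I expect the only genuine obstacle to be the bookkeeping of the powers of $P$ in the measure identities, and in particular the boundary identity $\Psi^{*}d\mu_{G}=P^{-1}d\mu_{k}$: the boundary Liouville measure $d\Sigma^{2n-2}$ is not a naive product, so one must track carefully how the $(n-1)$-dimensional base and fibre factors and the normal pairing conspire under the conformal change $G=Pg$ so that the net exponent is exactly $-1$. Everything else is a routine change of variables.
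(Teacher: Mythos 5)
Your proposal is correct and follows essentially the same route as the paper's proof: both transfer the statement to the reduced magnetic system $(G,\alpha)=(2(k-U)g,\alpha)$ via the fibrewise scaling $w=v/P$, establish the measure identities $\Psi^{*}d\Sigma_{G}^{2n-1}=P^{1/2}d\Sigma_{k}^{2n-1}$ and $\Psi^{*}d\mu_{G}=P^{-1}d\mu_{k}$, apply the magnetic Santal\'o formula of \cite{dpsu} to the transported integrand $P^{-1/2}f(x,Pw)$, and undo the Jacobi--Maupertuis reparametrization $ds=P\,dt$. The only difference is cosmetic: you justify the fibre-measure scaling by a homothety-plus-conformal-change argument, whereas the paper carries out the same computation explicitly in local coordinates.
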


\begin{proof}
Consider the reduced magnetic system $(G,\alpha)$, with $G=2(k-U)\alpha$. By Lemma \ref{lemma:basics_mp}, $\gamma(s)=\sigma(t(s))$ is a unit speed magnetic geodesic, where
\[ s(t)=\int_{0}^{t}2(k-U(\sigma)). \]
Then,
\[ ds=P(x)dt, \quad \frac{d\gamma}{ds}=\frac{d \sigma}{dt}P^{-1}(x). \]
Note that the volume forms corresponding to $G$ and $g$ are related by
\[ d\vol_{G}^{n}=P^{\frac{n}{2}}\vol_{g}^{n}, \quad d\vol_{G}^{n-1}=P^{\frac{n-1}{2}}\vol_{g}^{n-1}. \]
Let $v \in S_{x}^{k}M$ and define $w=vP^{-1}$. Then, 
\[  v \in S_{x}^{k}M \iff w \in S^{G}M. \]
The measure in $S_{x}^{G}M$ is given, in local coordinates, by
\[ d\sigma^{G}=\sqrt{\det G(x)} \sum_{i=1}^{n}(-1)^{i-1} w^{i} dw^{1} \wedge \cdots \wedge \hhat{dw^{i}} \wedge \cdots \wedge dw^{n}. \]
Since $w=vP^{-1}$, we find that the measure in $S_{x}^{k}M$ is given by
\begin{align*}
    d\sigma^{k}&=\sqrt{\det g(x)} \sum_{i=1}^{n}(-1)^{i-1} \frac{v^{i}}{|v|_{g}} dv^{1} \wedge \cdots \wedge \hhat{dv^{i}} \wedge \cdots \wedge dv^{n} \\
    &=\sqrt{\det g(x)}P^{n-\frac{1}{2}}\sum_{i=1}^{n}(-1)^{i-1} w^{i} dw^{1} \wedge \cdots \wedge \hhat{dw^{i}} \wedge \cdots \wedge dw^{n} \\
    &=P^{\frac{n-1}{2}}d\sigma^{G}.
\end{align*}
Then, 
\[ d\vol_{g}^{n} d\sigma^{k}=P^{-\frac{1}{2}} d\vol_{G}^{n} d\sigma^{G}.\]
in other words, the Liouville forms are related in the following way
\[ d\Sigma_{k}^{2n-1}=P^{-\frac{1}{2}}d\Sigma_{G}^{2n-1}. \]
Using the change of variables $w=P^{-1}v$, Santal\'o's formula in the magnetic case (\cite{dpsu}*{Lemma A.8}) gives
\begin{align*}
    &\int_{S^{k}M}f(x,v) d\Sigma_{k}^{2n-1}(x,v) \\
    =&\int_{S^{G}M}f(x,Pw)P^{-\frac{1}{2}}(x)d\Sigma_{G}^{2n-1}(x,w) \\
    =&\int_{\partial_{+}S^{G}M} \left( \int_{0}^{\tau^{G}(x,w)}  f(\gamma(s),P(\gamma(s))\dot{\gamma}(s))P^{-\frac{1}{2}}(\gamma(s))ds \right)  d\mu_{G}(x,w) \\
    =&:\mathcal{I},
\end{align*} 
where $\tau^{G}$ is the exit function for the magnetic system $(G,\alpha)$. Here $\dot{\gamma}=\frac{d\gamma}{ds}$. Since $d\Sigma_{G}^{2n-2}=d\Sigma_{k}^{2n-2}$, we find
\[ d\mu_{G}(x,w)=P(P^{-1}v,P^{-1}\nu_{k})_{g}d\Sigma_{k}^{2n-2}(x,v)=P^{-1}(x)d\mu_{k}(x,v), \]
where $\nu_{k}(x)=P^{\frac{1}{2}}\nu(x)$.
Therefore, 
\[ \mathcal{I}=\int_{\partial_{+}S^{k}M} \left(\int_{0}^{\tau(x,v)} f(\sigma(t),\dot{\sigma}(t)) P^{\frac{1}{2}}(\sigma(t))dt \right)  P^{-1}(x)d \mu_{k}, \]   
where $\dot{\sigma}=\frac{d\sigma}{dt}$.
\end{proof}

\begin{bibdiv} 
\begin{biblist}

\bib{am}{book}{
   author={Abraham, Ralph},
   author={Marsden, Jerrold E.},
   title={Foundations of mechanics},
   edition={2},
   note={With the assistance of Tudor Ra\c{t}iu and Richard Cushman},
   publisher={Benjamin/Cummings Publishing Co., Inc., Advanced Book Program,
   Reading, MA},
   date={1978},
   pages={xxii+m-xvi+806},
   isbn={0-8053-0102-X},
   review={\MR{0515141}},
}

\bib{ainsworth}{article}{
   author={Ainsworth, Gareth},
   title={The magnetic ray transform on Anosov surfaces},
   journal={Discrete Contin. Dyn. Syst.},
   volume={35},
   date={2015},
   number={5},
   pages={1801--1816},
   issn={1078-0947},
   review={\MR{3294225}},
   doi={10.3934/dcds.2015.35.1801},
}

\bib{arnoldclas}{book}{
   author={Arnol\cprime d, V. I.},
   title={Mathematical methods of classical mechanics},
   series={Graduate Texts in Mathematics},
   volume={60},
   note={Translated from the 1974 Russian original by K. Vogtmann and A.
   Weinstein;
   Corrected reprint of the second (1989) edition},
   publisher={Springer-Verlag, New York},
   date={1989},
   pages={xvi+516},
   isbn={0-387-96890-3},
   review={\MR{1345386}},
}

\bib{arnold}{collection}{
   title={Dynamical systems. IV},
   series={Encyclopaedia of Mathematical Sciences},
   volume={4},
   edition={revised edition},
   editor={Arnold, V. I.},
   editor={Novikov, S. P.},
   note={Symplectic geometry and its applications;
   A translation of {\it Current problems in mathematics. Fundamental
   directions, Vol. 4 (Russian)}, Akad. Nauk SSSR, Vsesoyuz. Inst. Nauchn. i
   Tekhn. Inform., Moscow, 1985 [MR0842907 (87j:58032)];
   Translated by G. Wasserman;
   Translation edited by V. I. Arnold and S. P. Novikov},
   publisher={Springer-Verlag, Berlin},
   date={2001},
   pages={vi+336},
   isbn={3-540-62635-2},
   review={\MR{1866630}},
   doi={10.1007/978-3-662-06791-8},
}

\bib{akn}{book}{
   author={Arnold, Vladimir I.},
   author={Kozlov, Valery V.},
   author={Neishtadt, Anatoly I.},
   title={Mathematical aspects of classical and celestial mechanics},
   series={Encyclopaedia of Mathematical Sciences},
   volume={3},
   edition={3},
   note={[Dynamical systems. III];
   Translated from the Russian original by E. Khukhro},
   publisher={Springer-Verlag, Berlin},
   date={2006},
   pages={xiv+518},
   isbn={978-3-540-28246-4},
   isbn={3-540-28246-7},
   review={\MR{2269239}},
}

\bib{ad}{article}{
   author={Assylbekov, Yernat M.},
   author={Dairbekov, Nurlan S.},
   title={The X-ray transform on a general family of curves on Finsler
   surfaces},
   journal={J. Geom. Anal.},
   volume={28},
   date={2018},
   number={2},
   pages={1428--1455},
   issn={1050-6926},
   review={\MR{3790506}},
   doi={10.1007/s12220-017-9869-1},
}

\bib{az}{article}{
   author={Assylbekov, Yernat M.},
   author={Zhou, Hanming},
   title={Boundary and scattering rigidity problems in the presence of a magnetic field and a potential},
   journal={Inverse Probl. Imaging},
   volume={9},
   date={2015},
   number={4},
   pages={935--950},
   issn={1930-8337},
   review={\MR{3461698}},
   doi={10.3934/ipi.2015.9.935},
}

\bib{bg}{article}{
   author={Bartolo, Rossella},
   author={Germinario, Anna},
   title={Convexity conditions on the boundary of a stationary spacetime and
   applications},
   journal={Commun. Contemp. Math.},
   volume={11},
   date={2009},
   number={5},
   pages={739--769},
   issn={0219-1997},
   review={\MR{2561935}},
   doi={10.1142/S0219199709003545},
}

\bib{fsu}{article}{
   author={Frigyik, Bela},
   author={Stefanov, Plamen},
   author={Uhlmann, Gunther},
   title={The X-ray transform for a generic family of curves and weights},
   journal={J. Geom. Anal.},
   volume={18},
   date={2008},
   number={1},
   pages={89--108},
   issn={1050-6926},
   review={\MR{2365669}},
   doi={10.1007/s12220-007-9007-6},
}

\bib{cggmw}{article}{
   author={Chanda, Sumanto},
   author={Gibbons, G. W.},
   author={Guha, Partha},
   author={Maraner, Paolo},
   author={Werner, Marcus C.},
   title={Jacobi-Maupertuis Randers-Finsler metric for curved spaces and the
   gravitational magnetoelectric effect},
   journal={J. Math. Phys.},
   volume={60},
   date={2019},
   number={12},
   pages={122501, 9},
   issn={0022-2488},
   review={\MR{4038551}},
   doi={10.1063/1.5098869},
}

\bib{ci}{book}{
   author={Contreras, Gonzalo},
   author={Iturriaga, Renato},
   title={Global minimizers of autonomous Lagrangians},
   series={22$^{\rm o}$ Col\'{o}quio Brasileiro de Matem\'{a}tica. [22nd
   Brazilian Mathematics Colloquium]},
   publisher={Instituto de Matem\'{a}tica Pura e Aplicada (IMPA), Rio de
   Janeiro},
   date={1999},
   pages={148},
   isbn={85-244-0151-6},
   review={\MR{1720372}},
}

\bib{dpsu}{article}{
   author={Dairbekov, Nurlan S.},
   author={Paternain, Gabriel P.},
   author={Stefanov, Plamen},
   author={Uhlmann, Gunther},
   title={The boundary rigidity problem in the presence of a magnetic field},
   journal={Adv. Math.},
   volume={216},
   date={2007},
   number={2},
   pages={535--609},
   issn={0001-8708},
   review={\MR{2351370}},
   doi={10.1016/j.aim.2007.05.014},
}

\bib{floressanchez}{article}{
   author={Flores, Jos\'{e} Luis},
   author={S\'{a}nchez, Miguel},
   title={Geodesics in stationary spacetimes. Application to Kerr spacetime},
   journal={Int. J. Theor. Phys. Group Theory Nonlinear Opt.},
   volume={8},
   date={2002},
   number={3},
   pages={319--336},
   issn={1525-4674},
   review={\MR{2128502}},
}

\bib{germinario}{article}{
   author={Germinario, Anna},
   title={Geodesics in stationary spacetimes and classical Lagrangian
   systems},
   journal={J. Differential Equations},
   volume={232},
   date={2007},
   number={1},
   pages={253--276},
   issn={0022-0396},
   review={\MR{2281195}},
   doi={10.1016/j.jde.2006.09.009},
}

\bib{ginzburg}{article}{
   author={Ginzburg, Viktor L.},
   title={On closed trajectories of a charge in a magnetic field. An
   application of symplectic geometry},
   conference={
      title={Contact and symplectic geometry},
      address={Cambridge},
      date={1994},
   },
   book={
      series={Publ. Newton Inst.},
      volume={8},
      publisher={Cambridge Univ. Press, Cambridge},
   },
   isbn={0-521-57086-7},
   date={1996},
   pages={131--148},
   review={\MR{1432462}},
}

\bib{iw}{article}{
   author={Ilmavirta, Joonas},
   author={Waters, Alden},
   title={Recovery of the sound speed for the acoustic wave equation from
   phaseless measurements},
   journal={Commun. Math. Sci.},
   volume={16},
   date={2018},
   number={4},
   pages={1017--1041},
   issn={1539-6746},
   review={\MR{3878151}},
   doi={10.4310/CMS.2018.v16.n4.a5},
}

\bib{jo2007}{article}{
   author={Jollivet, Alexandre},
   title={On inverse problems in electromagnetic field in classical
   mechanics at fixed energy},
   journal={J. Geom. Anal.},
   volume={17},
   date={2007},
   number={2},
   pages={275--319},
   issn={1050-6926},
   review={\MR{2320165}},
   doi={10.1007/BF02930725},
}

\bib{kozlov}{article}{
   author={Kozlov, V. V.},
   title={Calculus of variations in the large and classical mechanics},
   language={Russian},
   journal={Uspekhi Mat. Nauk},
   volume={40},
   date={1985},
   number={2(242)},
   pages={33--60, 237},
   issn={0042-1316},
   review={\MR{0786086}},
}

\bib{lz}{article}{
   author={Lai, Ru-Yu},
   author={Zhou, Hanming},
   title={Inverse source problems in transport equations with external
   forces},
   journal={J. Differential Equations},
   volume={302},
   date={2021},
   pages={728--752},
   issn={0022-0396},
   review={\MR{4316016}},
   doi={10.1016/j.jde.2021.09.011},
}

\bib{lee}{book}{
   author={Lee, John M.},
   title={Introduction to Riemannian manifolds},
   series={Graduate Texts in Mathematics},
   volume={176},
   edition={2},
   publisher={Springer, Cham},
   date={2018},
   pages={xiii+437},
   isbn={978-3-319-91754-2},
   isbn={978-3-319-91755-9},
   review={\MR{3887684}},
}

\bib{maraner}{article}{
   author={Maraner, Paolo},
   title={On the Jacobi metric for a general Lagrangian system},
   journal={J. Math. Phys.},
   volume={60},
   date={2019},
   number={11},
   pages={112901, 10},
   issn={0022-2488},
   review={\MR{4035801}},
   doi={10.1063/1.5124142},
}

\bib{mazzucchelli}{book}{
   author={Mazzucchelli, Marco},
   title={Critical point theory for Lagrangian systems},
   series={Progress in Mathematics},
   volume={293},
   publisher={Birkh\"{a}user/Springer Basel AG, Basel},
   date={2012},
   pages={xii+187},
   isbn={978-3-0348-0162-1},
   isbn={978-3-0348-0163-8},
   review={\MR{3015021}},
   doi={10.1007/978-3-0348-0163-8},
}

\bib{mt23}{article}{
   author={Mu\~noz-Thon, Sebasti\'an},
   title={The boundary and scattering rigidity problems for simple MP-systems},
   date={2023},
   eprint={2312.02506},
   status={preprint},
}

\bib{paternain}{book}{
   author={Paternain, Gabriel P.},
   title={Geodesic flows},
   series={Progress in Mathematics},
   volume={180},
   publisher={Birkh\"{a}user Boston, Inc., Boston, MA},
   date={1999},
   pages={xiv+149},
   isbn={0-8176-4144-0},
   review={\MR{1712465}},
   doi={10.1007/978-1-4612-1600-1},
}

\bib{psu}{book}{
   author={Paternain, Gabriel},
   author={Salo, Mikko},
   author={Uhlmann, Gunther},
   title={Geometric inverse problems---with emphasis on two dimensions},
   series={Cambridge Studies in Advanced Mathematics},
   volume={204},
   publisher={Cambridge University Press, Cambridge},
   date={2023},
   pages={xxiv+344},
   isbn={978-1-316-51087-2},
   review={\MR{4520155}},
}

\bib{sha}{book}{
   author={Sharafutdinov, V. A.},
   title={Integral geometry of tensor fields},
   series={Inverse and Ill-posed Problems Series},
   publisher={VSP, Utrecht},
   date={1994},
   pages={271},
   isbn={90-6764-165-0},
   review={\MR{1374572}},
   doi={10.1515/9783110900095},
}

\bib{sha_ray}{article}{
   author={Sharafutdinov, V. A.},
   title={Ray transform on Riemannian manifolds. Eight lectures on integral geometry},
   note={\url{http://old.math.nsc.ru/~sharafutdinov/files/Lectures.pdf}}
}

\bib{plamen}{article}{
   author={Stefanov, Plamen},
   title={The Lorentzian scattering rigidity problem and rigidity of stationary  metrics},
   date={2023},
   eprint={2212.13213},
   status={preprint},
}

\bib{su05}{article}{
   author={Stefanov, Plamen},
   author={Uhlmann, Gunther},
   title={Boundary rigidity and stability for generic simple metrics},
   journal={J. Amer. Math. Soc.},
   volume={18},
   date={2005},
   number={4},
   pages={975--1003},
   issn={0894-0347},
   review={\MR{2163868}},
   doi={10.1090/S0894-0347-05-00494-7},
}

\bib{zhou}{article}{
   author={Zhou, Hanming},
   title={The local magnetic ray transform of tensor fields},
   journal={SIAM J. Math. Anal.},
   volume={50},
   date={2018},
   number={2},
   pages={1753--1778},
   issn={0036-1410},
   review={\MR{3780743}},
   doi={10.1137/16M1093963},
}

\bib{uv}{article}{
   author={Uhlmann, Gunther},
   author={Vasy, Andr\'{a}s},
   title={The inverse problem for the local geodesic ray transform},
   journal={Invent. Math.},
   volume={205},
   date={2016},
   number={1},
   pages={83--120},
   issn={0020-9910},
   review={\MR{3514959}},
   doi={10.1007/s00222-015-0631-7},
}

\bib{zhang}{article}{
   author={Zhang, Yang},
   title={The X-ray transform on a generic family of smooth curves},
   journal={J. Geom. Anal.},
   volume={33},
   date={2023},
   number={6},
   pages={Paper No. 190, 27},
   issn={1050-6926},
   review={\MR{4572203}},
   doi={10.1007/s12220-023-01236-7},
}

\end{biblist}
\end{bibdiv}

\end{document}